\tikzset{
  on each segment/.style={
    decorate,
    decoration={
      show path construction,
      moveto code={},
      lineto code={
        \path [#1]
        (\tikzinputsegmentfirst) -- (\tikzinputsegmentlast);
      },
      curveto code={
        \path [#1] (\tikzinputsegmentfirst)
        .. controls
        (\tikzinputsegmentsupporta) and (\tikzinputsegmentsupportb)
        ..
        (\tikzinputsegmentlast);
      },
      closepath code={
        \path [#1]
        (\tikzinputsegmentfirst) -- (\tikzinputsegmentlast);
      },
    },
  },
  mid arrow/.style={line width=1pt,postaction={decorate,decoration={
        markings,
        mark=at position .55 with -{\arrow[#1]{Stealth}}
      }}},
}
\newtheorem*{theorem*}{Theorem}
\newtheorem{maintheorem}{Theorem}[section]
\newtheorem{maincorollary}[maintheorem]{Corollary}
\newtheorem{theorem}{Theorem}[section]
\newtheorem{lemma}[theorem]{Lemma}
\newtheorem{proposition}[theorem]{Proposition}
\theoremstyle{definition}
\newtheorem{definition}[theorem]{Definition}
\newtheorem{example}[theorem]{Example}
\newtheorem{remark}[theorem]{Remark}
\newtheoremstyle{myitemstyle}						
	{}			
	{}			
	{}			
	{}			
	{}			
	{.}			
	{ }			
	{}			
\theoremstyle{myitemstyle}
\newtheorem{myitemthm}{}
\newcommand{\hooklongrightarrow}{\lhook\joinrel\longrightarrow}
\newcommand{\R}{\mathbb{R}}
\newcommand{\Z}{\mathbb{Z}}
\newcommand{\C}{\mathbb{C}}
\DeclareMathOperator{\Pic}{Pic}
\DeclareMathOperator{\Hom}{Hom}
\DeclareMathOperator{\val}{val}
\DeclareMathOperator{\Div}{Div}
\DeclareMathOperator{\Jac}{Jac}
\DeclareMathOperator{\PDiv}{PDiv}
\DeclareMathOperator{\ord}{ord}
\let\Rat\relax
\DeclareMathOperator{\Rat}{Rat}
\let\div\relax
\DeclareMathOperator{\div}{div}
\newcommand{\X}{\mathfrak{X}}
\title{An Abel--Jacobi theorem for metrized complexes of Riemann surfaces} 
\date{}
\author{Maximilian C. E. Hofmann}
\address{Institut f\"ur Mathematik, Goethe--Universit\"at Frankfurt,
60325 Frankfurt am Main, Germany\\
Department of Mathematics, Stony Brook University, Stony Brook, NY 11794-3651, USA}
\email{maximilian.hofmann@stonybrook.edu}
\author{Martin Ulirsch}
\address{Institut f\"ur Mathematik, Goethe--Universit\"at Frankfurt,
60325 Frankfurt am Main, Germany}
\email{ulirsch@math.uni-frankfurt.de}
\begin{document}

\begin{abstract} Motivated by the recent surge of interest in the geometry of hybrid spaces, we prove an Abel--Jacobi theorem for a metrized complex of Riemann surfaces, generalizing both the classical Abel--Jacobi theorem and its tropical analogue. 
\end{abstract}

\maketitle

\setcounter{tocdepth}{1}
\tableofcontents

\section*{Introduction}

A recent trend at the intersection of Archimedean, non-Archimedean, and tropical geometry is the introduction of \emph{hybrid spaces}, which form a convenient framework to encode both geometric and combinatorial phenomena. Central highlights of this trend are the study of limits of volume forms on degenerations of Calabi-Yau varieties motivated by mirror symmetry \cite{BoucksomJonsson} and the construction of hybrid compactification of moduli spaces (see e.g.\ \cite{OdakaI, OdakaII} and \cite{AminiNicolussiI, AminiNicolussiII}). 

The simplest one-dimensional hybrid spaces are metrized complexes of algebraic curves, introduced by Amini and Baker \cite{AminiBaker} in order to provide us with a new framework to study limit linear series from both an algebraic and a tropical perspective (see \cite{BakerJensen} for a survey of the latter developments). Roughly speaking, a metrized complex $\mathfrak{X}$ of algebraic curves consists of a metric graph $\Gamma$ together with a smooth projective curve $X_v$ at every vertex $v$ of $\Gamma$ that can be thought of as a normalization of a component of the special fiber in a semistable degeneration of a smooth projective curve (see Definition \ref{def_mCRS} below for a precise definition and Figure \ref{figure_mCRS} for some geometric intuition). In their work, Amini and Baker in particular proved a Riemann--Roch formula that generalizes both the classical Riemann--Roch formula on smooth projective algebraic curves and its tropical analogue \cite{BakerNorine, MikhalkinZharkov, GathmannKerber, AminiCaporaso}. 

\begin{figure}[ht]
\centering
\begin{tikzpicture}
    \draw (0,-0.1) ellipse (1.5 and 0.7);
    \draw [name path=test1] (-0.5,0) .. controls (-0.5,-0.25) and (0.5,-0.25) .. (0.5,0);
    \draw (-0.5,0) .. controls (-0.5,-0.25) and (0.5,-0.25) .. (0.5,0);
    \path [name path=test2] (-0.5,-0.1) -- (0.5,-0.1);
    \draw [name intersections={of=test1 and test2}]
    (intersection-1) .. controls (intersection-1 |- 0,0.1) and (intersection-2 |- 0,0.1) .. (intersection-2); 

    \begin{scope}[xshift=2cm, yshift=-2cm]
        \draw (0,-0.1) ellipse (1.5 and 0.7);
    \draw [name path=test1] (-0.5,0) .. controls (-0.5,-0.25) and (0.5,-0.25) .. (0.5,0);
    \draw (-0.5,0) .. controls (-0.5,-0.25) and (0.5,-0.25) .. (0.5,0);
    \path [name path=test2] (-0.5,-0.1) -- (0.5,-0.1);
    \draw [name intersections={of=test1 and test2}]
    (intersection-1) .. controls (intersection-1 |- 0,0.1) and (intersection-2 |- 0,0.1) .. (intersection-2); 
    \end{scope}

    \begin{scope}[xshift=4cm]
        \draw (0,0) circle (1);
        \draw (-1,0) .. controls (-1,-0.3) and (1,-0.3) .. (1,0);
        \draw [dotted] (-1,0) .. controls (-1,0.3) and (1,0.3) .. (1,0);
    \end{scope}

    \begin{scope}[xshift=7cm]
        \draw (0,0) circle (1);
        \draw (-1,0) .. controls (-1,-0.3) and (1,-0.3) .. (1,0);
        \draw [dotted] (-1,0) .. controls (-1,0.3) and (1,0.3) .. (1,0);
    \end{scope}

    \path 
    let
      \p1 = (1,0),
      \p2 = (3.8,0.5),
      \p3 = (0.7,-0.3),
    in
      coordinate (p1) at (\p1)
      coordinate (p2) at (\p2)
      coordinate (p3) at (\p3)
      coordinate (p4) at (1,-2)
      coordinate (p5) at (3,-2)
      coordinate (p6) at (3.8,-0.7)
      coordinate (p7) at (4.6,0.1)
      coordinate (p8) at (6.5,0.35);

  \draw (p1) -- (p2);
  \draw (p3) -- (p4);
  \draw (p5) -- (p6);
  \draw (p7) -- (p8);
  \fill[black] (p1) circle [radius=2pt];
  \fill[black] (p2) circle [radius=2pt];
  \fill[black] (p3) circle [radius=2pt];
  \fill[black] (p4) circle [radius=2pt];
  \fill[black] (p5) circle [radius=2pt];
  \fill[black] (p6) circle [radius=2pt];
  \fill[black] (p7) circle [radius=2pt];
  \fill[black] (p8) circle [radius=2pt];
     
\end{tikzpicture}
\caption{Geometric realization of a metrized complex of Riemann surfaces of genus $3$.}\label{figure_mCRS}
\end{figure}
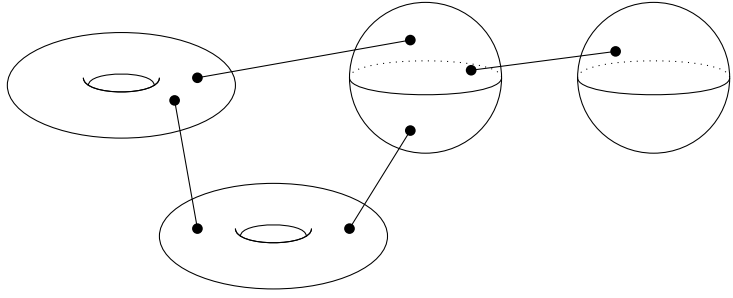

Another classical result from the geometry of compact Riemann surfaces, the Abel--Jacobi Theorem, also admits a tropical analogue \cite{MikhalkinZharkov, BakerFaber}. But, so far, a version of the Abel--Jacobi theorem for metrized complexes of smooth projective curves is missing from the literature. The central goal of this note is to change this state of affairs by proving an Abel--Jacobi theorem for metrized complexes of smooth projective curves over $\mathbb{C}$ or, for short, metrized complexes of compact Riemann surfaces (mCRS). 

\subsection*{Main result} 
In this article, we first recall the necessary background from the theory of divisors on metric graphs in Section \ref{section_metricgraphs} and on metrized complexes of Riemann surfaces in Section \ref{section_mCRS} (originally developed in \cite{AminiBaker}).
In particular, we write $\Div$, $\Div_0$, $\Pic$, and $\Pic^0$, for the groups of divisors, divisors of degree zero, divisor classes, and divisor classes of degree zero respectively on a compact Riemann surface, a metric graph, and a metrized complex of Riemann surfaces. 

In Section \ref{section_JacobianmCRS} below we then define the \emph{Jacobian} of a metrized complex of Riemann surfaces $\X$ as a real torus that interpolates between both the classical Jacobian $\Jac(X_v)$ of the Riemann surfaces $X_v$ at the vertices and the Jacobian $\Jac(\Gamma)$ of the underlying metric graph $\Gamma$, which is obtained from $\X$ by collapsing each $X_v$ to a point. We go on to construct a suitable Abel-Jacobi map from $\X$ to $\Jac(\X)$ which can be extended to a map $A_{\X,0}$ from the degree $0$ divisors $\Div_0(\X)$ on $\X$ to $\Jac(\X)$ and which generalizes the Abel-Jacobi maps $A_{v,0}\colon \Div_0(X_v)\rightarrow \Jac(X_v)$ of the Riemann surfaces $X_v$ at the vertices of $\Gamma$ and the Abel-Jacobi map $A_{\Gamma,0}\colon \Div_0(\Gamma)\rightarrow \Jac(\Gamma)$ of the underlying metric graph $\Gamma$.

\begin{maintheorem}[Theorem \ref{thm:abel_jacobi_for_mCRS}]\label{mainthm_AbelJacobimCRS} Let $\mathfrak{X}$ be a metrized complex of Riemann surfaces with underlying metric graph $\Gamma$ and with compact Riemann surfaces $X_v$ at the vertices $v$ of $\Gamma$.
    There is a canonical isomorphism of short exact sequences given by the following commutative diagram
    \[\begin{tikzcd}
    	0 & {\bigoplus_{v \in V}\mathrm{Pic}^0(X_v)} & {\mathrm{Pic}^0(\X)} & {\mathrm{Pic}^0(\Gamma)} & 0 \\
    	0 & {\bigoplus_{v \in V} \Jac(X_v)} & {\Jac(\X)} & {\Jac(\Gamma)} & 0
    	\arrow[from=1-1, to=1-2]
    	\arrow[from=1-2, to=1-3]
    	\arrow["A_{X,0} = \bigoplus_{v \in V} A_{v,0}",from=1-2, to=2-2]
    	\arrow[from=1-3, to=1-4]
    	\arrow["{A_{\X,0}}", from=1-3, to=2-3]
    	\arrow[from=1-4, to=1-5]
    	\arrow["{A_{\Gamma,0}}", from=1-4, to=2-4]
    	\arrow[from=2-1, to=2-2]
    	\arrow[from=2-2, to=2-3]
    	\arrow[from=2-3, to=2-4]
    	\arrow[from=2-4, to=2-5]
    \end{tikzcd},\]
    where all vertical maps are given by the respective Abel-Jacobi maps in degree zero. Furthermore, both rows are right-split, and each choice for a set of basepoints, where we choose one basepoint $p_v \in X_v$ for each Riemann surface $X_v$, gives rise to such a splitting.
\end{maintheorem}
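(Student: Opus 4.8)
The plan is to deduce the theorem from the classical Abel--Jacobi theorem on the curves $X_v$ and its tropical counterpart on $\Gamma$ by means of the short five lemma, after exhibiting the two rows as short exact sequences and checking that the Abel--Jacobi maps constitute a morphism between them. From the construction in Section~\ref{section_JacobianmCRS} I take for granted that the bottom row is a short exact sequence of real tori, that $A_{\X,0}$ restricts on divisors supported on the $X_v$ to $\bigoplus_{v}A_{v,0}$, and that $A_{\X,0}$ becomes $A_{\Gamma,0}$ after collapsing each $X_v$; hence the two squares commute by construction, and all the vertical maps are canonical, being degree-zero Abel--Jacobi maps. It then remains to (i) prove the top row is exact, (ii) prove that $A_{\X,0}$ annihilates principal divisors, so that it descends to $\Pic^0(\X)$, (iii) apply the five lemma, and (iv) produce the basepoint-dependent splittings.

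For (i), let $\tau\colon\Div_0(\X)\to\Div_0(\Gamma)$ be the tropicalization map that fixes points in the interiors of edges and sends a point of $X_v$ to $v$. For a rational function $\mathfrak{f}=(f,\{f_v\})$ on $\X$ it sends $\div_\X(\mathfrak{f})$ to $\div_\Gamma(f)$, hence descends to a homomorphism $\tau\colon\Pic^0(\X)\to\Pic^0(\Gamma)$, which is surjective because a divisor on $\Gamma$ lifts to $\X$ using arbitrary points of the $X_v$ over the vertices. The composite $\bigoplus_{v}\Pic^0(X_v)\to\Pic^0(\X)\xrightarrow{\tau}\Pic^0(\Gamma)$ vanishes since a degree-zero divisor on $X_v$ tropicalizes to $0\cdot v$. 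Conversely, if $[\mathcal{D}]\in\Pic^0(\X)$ has $\tau[\mathcal{D}]=0$, write $\tau(\mathcal{D})=\div_\Gamma(f)$ and replace $\mathcal{D}$ by the linearly equivalent divisor $\mathcal{D}-\div_\X(f,1,\dots,1)$; this has trivial tropicalization, hence is supported on the $X_v$ with degree zero on each, so $[\mathcal{D}]$ lies in the image of $\bigoplus_{v}\Pic^0(X_v)$. Finally that image map is injective: if a divisor supported on a single $X_v$ equals $\div_\X(\mathfrak{g})$ with $\mathfrak{g}=(g,\{g_v\})$, then $\div_\Gamma(g)=\tau(\div_\X(\mathfrak{g}))=0$, so $g$ is constant on the connected graph $\Gamma$, all slopes vanish, and the divisor equals $\div_{X_v}(g_v)$, hence is principal on $X_v$. (If this exact sequence is already among the facts recalled from \cite{AminiBaker}, one simply cites it.)

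For (ii), the group $\mathrm{Prin}(\X)$ of principal divisors is generated by the divisors $\div_{X_v}(g)$ with $g\in\Rat(X_v)^{\times}$ and by the divisors $\div_\X(f,1,\dots,1)$ with $f\in\PL_{\Z}(\Gamma)$, because $(f,\{f_v\})=(f,1,\dots,1)\cdot\prod_v(1,\dots,f_v,\dots,1)$ as rational functions on $\X$. The first kind is killed by $A_{\X,0}$ by the classical Abel's theorem on $X_v$ together with the compatibility of $A_{\X,0}$ with $A_{v,0}$. The second kind is supported on the union of the edges inside $\X$; by the explicit description of $A_{\X,0}$ from Section~\ref{section_JacobianmCRS} the class $A_{\X,0}(\div_\X(f,1,\dots,1))$ is computed by integrating over a chain lying in that union of edges, which forces it to equal the tropical Abel--Jacobi class of $\div_\Gamma(f)$, and the latter vanishes by the tropical Abel--Jacobi theorem \cite{MikhalkinZharkov, BakerFaber}. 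Hence $A_{\X,0}$ descends to $\Pic^0(\X)$, the two squares of the diagram commute there, and the short five lemma --- using that the outer vertical maps are isomorphisms by the classical and tropical Abel--Jacobi theorems --- shows that $A_{\X,0}\colon\Pic^0(\X)\to\Jac(\X)$ is an isomorphism and that the whole diagram is an isomorphism of short exact sequences. This settles (iii).

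For (iv), given basepoints $\mathbf{p}=(p_v)_{v\in V}$ I would construct a section $\sigma_{\mathbf{p}}\colon\Jac(\Gamma)\to\Jac(\X)$ of the bottom row and transport it through the isomorphism $A_{\X,0}$ to a section of the top row. The basepoints provide a lift of the period lattice of $\Jac(\Gamma)$ into that of $\Jac(\X)$: a cycle in $\Gamma$ is lifted to the topological realization of $\X$ by joining, on each surface $X_v$ the cycle passes through, the two relevant attaching points of $\Gamma$ to each other through $p_v$; this descends to the desired $\sigma_{\mathbf{p}}$, and the only thing to verify is that it is well-defined and a homomorphism splitting the projection, which is straightforward from the explicit model of $\Jac(\X)$. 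I expect step (ii) --- Abel's theorem for $\X$ --- to be the main obstacle: it is the only non-formal ingredient, everything else being bookkeeping with the two exact sequences or a diagram chase. The reduction above shows it decomposes into the classical Abel's theorem on each $X_v$ and the tropical one on $\Gamma$ once a rational function on $\X$ is split into its graph part and its vertex parts; so the genuine work is to make the description of $A_{\X,0}$ in Section~\ref{section_JacobianmCRS} explicit enough that this decomposition is transparent, with a secondary point of care being the well-definedness of the cycle-lift used for the splitting.
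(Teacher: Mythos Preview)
Your overall plan---prove the $\Pic^0$ row exact, show $A_{\X,0}$ kills principal divisors so it descends, then invoke the five lemma---is a legitimate variant of the paper's approach (which instead computes $\ker A_{\X,0}$ directly on $\Div_0$ and applies the snake lemma). But there is a real gap in step~(ii), precisely where you treat the divisors $\div_\X(f,1,\dots,1)$.

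Your claim that the integration chain can be taken inside the union of closed edges is correct (the restriction of $\div_\X(f,1,\dots,1)$ to each closed edge already has degree zero, a point you should make explicit), so one does obtain a representative $(0,\dots,0,\Psi)\in\Omega^*(\X)$ for $A_{\X,0}\bigl(\div_\X(f,1,\dots,1)\bigr)$. The gap is the next sentence, where you invoke the tropical Abel--Jacobi theorem to finish. That theorem only gives $[\Psi]=0$ in $\Jac(\Gamma)$, i.e.\ $\Psi=\int_\delta$ for some cycle $\delta\in H_1(\Gamma)$; it does \emph{not} follow that $[(0,\dots,0,\Psi)]=0$ in $\Jac(\X)$. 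Indeed, any lift $\widetilde\delta\in H_1(\X)$ satisfies $\chi(\widetilde\delta)=(b_1,\dots,b_n,\Psi)$ with $b_i=\int_{\widetilde\delta\cap X_{v_i}}$ in general nonzero, so $(0,\dots,0,\Psi)$ need not lie in $\chi\bigl(H_1(\X)\bigr)$. The paper isolates exactly this pitfall in the Remark following Proposition~\ref{prop:extended_rational_function_maps_to_zero} and resolves it by a direct computation showing $\Psi=0$ identically in $\Omega^*(\Gamma)$: one rewrites $\Psi(\omega)$ as $\sum_{e}\omega_e\bigl(f_\Gamma(e^+)-f_\Gamma(e^-)\bigr)$ and then reorganises the sum by vertices to get zero from the harmonicity of $\omega$. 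That calculation is the genuine content of this step; without it your descent argument, and hence the five-lemma conclusion, does not close.
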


Theorem \ref{mainthm_AbelJacobimCRS} implies the following Corollary \ref{maincor_AbelJacobi}, which one might want to call an \emph{Abel-Jacobi-Theorem} for metrized complex of Riemann surfaces. 

\begin{maincorollary}\label{maincor_AbelJacobi}
Let $\X$ be a mCRS and let $D \in \Div_0(\X)$ be a divisor on $\X$ of degree $0$. Then, the Abel--Jacobi map $A_{\X,0}$ is surjective, and $D$ is a principal divisor if and only if $A_{\X,0}(D) = 0$ in $\Jac(\X)$. In other words, the Abel--Jacobi map induces a canonical isomorphism
$$
\Jac(\X) \cong \frac{\Div_0(\X)}{\mathrm{PDiv}(\X)} \ .
$$
\end{maincorollary}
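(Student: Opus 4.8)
The plan is to deduce the statement directly from Theorem~\ref{mainthm_AbelJacobimCRS}. First recall from Section~\ref{section_JacobianmCRS} that, by definition, $\Pic^0(\X) = \Div_0(\X)/\PDiv(\X)$, so that there is a surjective quotient homomorphism $q\colon \Div_0(\X) \twoheadrightarrow \Pic^0(\X)$; moreover the degree-zero Abel--Jacobi map $A_{\X,0}\colon \Div_0(\X)\to\Jac(\X)$ vanishes on $\PDiv(\X)$, hence factors as $A_{\X,0} = \overline{A}_{\X,0}\circ q$ for a unique homomorphism $\overline{A}_{\X,0}\colon \Pic^0(\X)\to\Jac(\X)$, and this descended map is precisely the middle vertical arrow of the commutative diagram in Theorem~\ref{mainthm_AbelJacobimCRS}.

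By Theorem~\ref{mainthm_AbelJacobimCRS} that diagram is an isomorphism of short exact sequences, so in particular $\overline{A}_{\X,0}$ is an isomorphism of abelian groups. (Alternatively, one could invoke the classical Abel--Jacobi theorem for the Riemann surfaces $X_v$ and its tropical analogue for $\Gamma$ to see that the two outer verticals $\bigoplus_{v} A_{v,0}$ and $A_{\Gamma,0}$ are isomorphisms, and then conclude the same for $\overline{A}_{\X,0}$ by the five lemma applied to the two exact rows; in either case the point is that $\overline{A}_{\X,0}$ is bijective.) Granting this, the three assertions of the Corollary are a one-line diagram chase. The composite $A_{\X,0} = \overline{A}_{\X,0}\circ q$ is surjective, being a composite of two surjections. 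Next, $A_{\X,0}(D) = 0$ if and only if $\overline{A}_{\X,0}(q(D)) = 0$, which by injectivity of $\overline{A}_{\X,0}$ holds if and only if $q(D)=0$, i.e.\ if and only if $D\in\PDiv(\X)$; this is exactly the asserted characterisation of principal divisors. Finally, the bijectivity of $\overline{A}_{\X,0}$ is verbatim the claimed canonical identification $\Jac(\X)\cong\Div_0(\X)/\PDiv(\X)$.

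The only point requiring a little care is the bookkeeping between the two incarnations of the Abel--Jacobi map: the $A_{\X,0}$ of the Corollary is defined on all of $\Div_0(\X)$, whereas the vertical map in the diagram of Theorem~\ref{mainthm_AbelJacobimCRS} is its descent to $\Pic^0(\X)$, so one has to record that $A_{\X,0}$ annihilates principal divisors before the chase closes. Beyond that there is no substantial obstacle at the level of the Corollary; all of the real work, namely constructing $\Jac(\X)$ as the middle term of the bottom short exact sequence, defining $A_{\X,0}$ compatibly with the $A_{v,0}$ and with $A_{\Gamma,0}$, and proving bijectivity, is carried out in the proof of Theorem~\ref{mainthm_AbelJacobimCRS}.
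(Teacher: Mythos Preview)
Your argument is correct and is exactly the approach the paper takes: the paper does not give a separate proof of the corollary but simply records that it follows from Theorem~\ref{mainthm_AbelJacobimCRS}, and you have spelled out that implication carefully. One small remark: the fact that $A_{\X,0}$ annihilates $\PDiv(\X)$ is not established in Section~\ref{section_JacobianmCRS} as you suggest, but is rather part of the content of Theorem~\ref{mainthm_AbelJacobimCRS} itself (since the middle vertical arrow there is already a map out of $\Pic^0(\X)$); you do acknowledge this in your final paragraph, so the logic is sound, but the cross-reference should be adjusted.
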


We point out that this result reduces to the Abel-Jacobi theorem for Riemann surfaces, when the underlying metric graph is a point (see Example \ref{ex:Riemann_surfaces_as_mCRS} below), and to the Abel-Jacobi-Theorem for metric graphs, when the Riemann surfaces at the vertices are all projective lines (see Example \ref{ex:metric_graphs_as_mCRS} below). We do not obtain a new proof of either theorem since our proof uses both of them.

\subsection*{The analogy with logarithmic geometry} 
Logarithmic geometry, in the sense of Fontaine--Kato--Illusie, may be viewed as another take on the principles of hybrid geometry, as it encodes both classical algebro-geometric and tropical data in a convenient framework. In \cite{FosterRanganathanTalpoUlirsch}, it has been observed that logarithmic curves are closely related to metrized complexes of curves and, in particular, the geometry of the logarithmic Picard group, which has recently been brought to perfection in \cite{MolchoWise}, is closely related to the geometry of the Picard group of an mCRS. In particular, one should view our Theorem \ref{mainthm_AbelJacobimCRS} in analogy with the quotient presentation of the logarithmic Picard group constructed in \cite[Corollary 4.6.3]{MolchoWise}.

\subsection*{Acknowledgements}
The authors would like to thank Andreas Gross for helpful discussions en route to this article. Remarks from the anonymous referee have significantly improved the structure of our main argument and, for example, also lead to a simpler proof of Proposition \ref{prop_seshomology}. We thank the referee for generously sharing their ideas.

\subsection*{Funding} This project has received funding from the Deutsche Forschungsgemeinschaft (DFG, German Research Foundation) TRR 326 \emph{Geometry and Arithmetic of Uniformized Structures}, project number 444845124, from the DFG Sachbeihilfe \emph{From Riemann surfaces to tropical curves (and back again)}, project number 456557832, as well as the DFG Sachbeihilfe \emph{Rethinking tropical linear algebra: Buildings, bimatroids, and applications}, project number 539867663, within the  
SPP 2458 \emph{Combinatorial Synergies}.

\section{Divisors on metric graphs}\label{section_metricgraphs}
In this section, we recall the basic theory of divisors and their  linear equivalence on metric graphs up to the Abel--Jacobi theorem. General references include the surveys  \cite{BakerJensen} and \cite{Len} as well as \cite{MikhalkinZharkov} and \cite{BakerFaber}.

There are many essentially equivalent ways to define a metric graph. In this work, we choose the following definition, which we adapted from \cite{Mugnolo}.

\begin{definition}[Metric Graphs]
    Let $G = (V,E)$ be a connected multigraph with finite vertex set $V$ and edge set $E$ (possibly with loops) together with a length function $\ell : E \to \R_{>0}$. Choose an orientation for the edges and write $e^+$ for the starting vertex and $e^-$ for the end vertex of the edge $e$ under this orientation. For every edge $e$ with end points/vertices $e^+,e^-$, let $I_e \subseteq \R$ be an interval of length $\ell(e)$ with endpoints $x_{e^+}^e = \min I_e$ and $x_{e^-}^e = \mathrm{max}\, I_e$. If $e$ is a loop, we can immediately identify the endpoints of $I_e$ with each other and simply denote the resulting point by $x_{v,e}$. A \textit{metric graph} $\Gamma$ with \textit{model} $G$ is a geometric realization of $G$. We make this precise by considering the equivalence relation $\sim$ on the disjoint union of the intervals $I_e$ generated by
    $$
    x_{v}^e \sim x_{v'}^e \Longleftrightarrow v = v'.
    $$
    Then, we define
    $$
    \Gamma := \bigsqcup_{e \in E} I_e \Big/\sim.
    $$
    Note that equipping $\Gamma$ with the \textit{shortest-path}-metric in the obvious way makes $\Gamma$ into a metric space (whose structure does not depend on the orientation chosen in the beginning). The \emph{valency} of a point $p \in \Gamma$ is the number of endpoints of intervals $I_e$ glued together at $p$ or $2$ if $p$ is in the interior of one of the $I_e$.
\end{definition}

In this definition, metric graphs are always compact. The reader should also note that two different models can lead to isometric metric graphs. However, if $G$ and $G'$ are two such models, they always admit a common refinement obtained by successively replacing an edge by two new edges connected via a vertex of degree $2$. The length of the old edge equals the sum of the lengths of the two new edges.

Given a metric graph $\Gamma$, we can define its \emph{Euler characteristic} (respectively its \emph{genus}) to be the Euler characteristic (genus) of any model of $\Gamma$. It is easy to see that these numbers are independent of the  chosen model.

A continuous function $f : \Gamma \to \R$ on a metric graph $\Gamma$ is called \textit{rational} if it is piecewise linear with integer slopes. The abelian group of rational functions will be denoted by $\Rat(\Gamma)$. Now, let $0 \neq f \in \Rat(\Gamma)$ be rational and $p \in \Gamma$. The order $\ord_p(f)$ of $f$ in $p$ is defined to be the sum of the outgoing slopes of $f$ at $p$. Note that the order of a rational functions is nonzero only at finitely many points. A rational function on $\Gamma$ should be thought of as the tropical analogue of a meromorphic function on a Riemann surface.

A \textit{divisor} $D = \sum_{p \in \Gamma} D(p) \cdot p$ is an element of the free abelian group $\Div(\Gamma)$ generated by the points $p \in \Gamma$. A divisor is called \textit{effective} if $D(p) \geq 0$ for all $p \in \Gamma$. The degree of $D$ is 
$$
\deg(D) = \sum_{p \in \Gamma} D(p),
$$
and the set of all degree $d$ divisors is denoted $\Div_d(\Gamma)$. To a rational function $f \in \Rat(\Gamma)$, we associate 
$$
\div(f) = \sum_{p \in \Gamma} \ord_p(f) \cdot p.
$$
Divisors of this form are called \textit{principal}, and we denote the set of all principal divisors by $\mathrm{PDiv}(\Gamma)$. Two divisors $D_1, D_2$ are called linearly equivalent, written $D_1 \sim D_2$, if their difference is principal. 

To formulate an Abel-Jacobi theorem for metric graphs, we need a suitable definition that is analogous to the notion of holomorphic $1$-forms on Riemann surfaces.

\begin{definition}[Harmonic $1$-Forms]
    Let $G = (V,E)$ be a connected multigraph. Again, we also choose an orientation for the edges of $G$ and write $e^+$ for the starting vertex and $e^-$ for the end vertex of an edge $e \in E$. A $1$-form on $G$ is an element
    $$
    \omega = \sum_{e \in E} \omega_e de
    $$
    of the $\R$-vector space with formal basis $\{de \vert e \in E\}$. A $1$-form is called \textit{harmonic} if
    $$
    \sum_{\substack{e \in E \\ e^+ = v}} \omega_e = \sum_{\substack{e \in E \\ e^- = v}} \omega_e
    $$
    for all $v \in V$. We denote the subspace of harmonic $1$-forms by $\Omega(G)$.
    Now, let $G$ and $G'$ be two oriented models for a metric graph $\Gamma$. By finding a common refinement of $G$ and $G'$, it is easy to see that there is a canonical isomorphism
    $\Omega(G) \cong \Omega(G')$.
    We can therefore define
    $$
    \Omega(\Gamma) := \Omega(G)
    $$
    for an oriented model $G$ of $\Gamma$. We call elements of this space \textit{harmonic} $1$\textit{-forms} on $\Gamma$.
\end{definition}

In analogy to the construction of the Jacobian of a Riemann surface, we now wish to embed $H_1(\Gamma,\Z)$ into $\Omega(\Gamma)^* := \Hom_\R(\Omega(\Gamma),\R)$. Let 
$$
\Gamma = \bigsqcup_{e \in E} I_e \Big/ \sim
$$
be a metric graph with oriented model $G = (V,E)$. We call a path $\gamma : [0,1] \to \Gamma$ \textit{piecewise smooth} if its restriction to the preimages of the $I_e$ are piecewise smooth. Note that the $I_e$ are $1$-dimensional manifolds with boundary and an orientation induced by the oriented model $G$. We can now define the path integral of $\omega = \sum_{e \in E} \omega_e de \in \Omega(\Gamma)$ via pullback:
$$
\int_\gamma \omega := \sum_{e \in E} \omega_e \int (\gamma|_{\gamma^{-1}(I_e)})^* de.
$$
This integral only depends on the homotopy class of a path, and so we obtain our desired well defined injection

\begin{equation*}\begin{split}
H_1(\Gamma) &\hooklongrightarrow \Omega(\Gamma)^*\\
[\gamma] &\longmapsto \int_\gamma := (\omega \mapsto \int_\gamma \omega),
\end{split}\end{equation*}
where $\gamma$ is a smooth representative of $[\gamma]$. We call the quotient
$$
\Jac(\Gamma) := \Omega(\Gamma)^*/H_1(\Gamma)
$$
the \textit{Jacobian} of $\Gamma$.

Choosing a basepoint $p \in \Gamma$, there is a well-defined \textit{Abel-Jacobi} map
\begin{equation*}\begin{split}
A_\Gamma : \Gamma &\longrightarrow \Jac(\Gamma)\\
p &\longmapsto \int_{p_0}^p := \left[\int_\gamma\right]
\end{split}\end{equation*}
where $\gamma$ is some piecewise smooth path from $p_0$ to $p$. We can extend $A_\Gamma$ linearly to the set of all divisors $\Div(\Gamma)$. If we restrict ourselves to divisors of degree $0$, $A_\Gamma$ does no longer depend on the chosen basepoint. We write $A_{\Gamma,0}$ for this map, which is also called \emph{Abel-Jacobi map}.

We now have:

\begin{theorem}[Abel-Jacobi theorem for metric graphs]\label{thm:abel_jacobi_theorem_for_metric_graphs}
    Let $D \in \Div_0(\Gamma)$ be a degree $0$ divisor. Then, $D$ is principal if and only if
    $$
    A_{\Gamma,0}(D) = 0.
    $$
    Furthermore, the map $A_{\Gamma,0}$ is surjective and therefore induces an isomorphism 
    $$
    \Jac(\Gamma) \cong \frac{\Div_0(\Gamma)}{\mathrm{PDiv}(\Gamma)}.
    $$
\end{theorem}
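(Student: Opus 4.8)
The plan is to prove the Abel--Jacobi theorem for metric graphs by separating the two assertions: first the kernel characterization (a degree-zero divisor $D$ is principal if and only if $A_{\Gamma,0}(D)=0$), then the surjectivity of $A_{\Gamma,0}$. The key idea throughout is that a rational function $f\in\Rat(\Gamma)$ provides, via its slopes, a natural $1$-cochain on an oriented model, and that the harmonicity condition appearing in the definition of $\Omega(\Gamma)$ is precisely the dual of the boundary map in simplicial homology. So the engine of the proof is the interplay between the combinatorial Hodge-theoretic decomposition of $1$-cochains on a graph and the integration pairing defining $\Jac(\Gamma)$.

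First I would set up the discrete exterior calculus on an oriented model $G=(V,E)$ refined enough that $\div(f)$ is supported on vertices and all points of $D$ are vertices. One identifies the space $C^1(G,\R)$ of $1$-cochains with the formal span of $\{de\}$, equips it with the inner product making $\{de\}$ orthonormal (weighted by edge lengths when passing to $\Gamma$), and observes that $\Omega(G)$ is exactly the kernel of the transpose of the incidence matrix, i.e.\ the image of $H_1(G,\R)$ under the natural inclusion, while its orthogonal complement is the image of the coboundary map $d\colon C^0(G,\R)\to C^1(G,\R)$. The slope cochain of a piecewise-linear $f$ lies in the ``gradient'' part; its coboundary-preimage recovers $f$ up to a constant, and $\div(f)$ reads off as the combinatorial Laplacian applied to the vertex potential. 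For the forward direction of the kernel statement, if $D=\div(f)$ then for any harmonic $\omega$ and any path $\gamma$ from $p_0$ to $p$ one computes $\int_\gamma\omega$ against the slope data of $f$; summing over $D$ with multiplicities, the contribution telescopes and the orthogonality of harmonic forms to gradients forces $A_{\Gamma,0}(D)$ to vanish in $\Omega(\Gamma)^*/H_1(\Gamma)$. For the converse, given $A_{\Gamma,0}(D)=0$ one has $\sum_i \int_{p_0}^{p_i}(\,\cdot\,)$ equal, as a functional on $\Omega(\Gamma)$, to $\int_c$ for some actual cycle $c\in H_1(\Gamma,\Z)$; subtracting off $c$ one obtains a $1$-chain whose boundary is $D$ and which pairs trivially with all harmonic forms, hence lies in the coboundary space, i.e.\ is the ``gradient'' of a piecewise-linear potential $f$ with integer slopes (integrality of slopes comes from integrality of $D$ and of the cycle $c$), and $\div(f)=D$.

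Next, for surjectivity I would argue that $\Omega(\Gamma)^*$ is spanned by the image of $H_1(\Gamma,\Z)$ together with the image of the Abel--Jacobi map: concretely, for a vertex $p$ the functional $\omega\mapsto\int_{p_0}^p\omega$ together with the lattice $H_1(\Gamma,\Z)$ generates a subgroup whose real span is all of $\Omega(\Gamma)^*$, because the integration pairing $H_1(\Gamma,\R)\times\Omega(\Gamma)\to\R$ is perfect (both sides have dimension equal to the first Betti number) — this is where one uses that $\Omega(\Gamma)\cong H^1$. Since $\Jac(\Gamma)$ is a compact real torus of that dimension and the image of $A_{\Gamma,0}$ is a subgroup containing a full-rank set of directions, a short topological-group argument (the image is a closed connected subgroup, or: it is divisible and of full dimension) shows it is everything. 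Combining the two parts, the induced map $\Div_0(\Gamma)/\mathrm{PDiv}(\Gamma)\to\Jac(\Gamma)$ is a well-defined injective surjection, hence the claimed isomorphism.

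The main obstacle I anticipate is the converse direction of the kernel statement, specifically producing the rational function $f$ with \emph{integer} slopes from the mere vanishing of $A_{\Gamma,0}(D)$. The vanishing only gives a real $1$-chain modulo $H_1(\Gamma,\Z)$ that bounds $D$ and is orthogonal to harmonic forms; one must (i) choose a fine enough model so that the relevant chain is cellular, (ii) pin down the ambiguity by the integral cycle $c$ witnessing the vanishing in the torus, and (iii) check that the resulting potential, a priori only piecewise-linear with real slopes, actually has integer slopes — this integrality is forced because on each edge the slope equals an integer combination of the coefficients of $D$ at vertices downstream along a spanning tree plus the integer coefficients of $c$, but making this precise requires care with orientations and with the non-uniqueness of $c$. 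Everything else — the discrete Hodge decomposition, the telescoping computation in the forward direction, the torus argument for surjectivity — is routine linear algebra and point-set topology.
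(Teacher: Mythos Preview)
The paper does not supply its own proof of this theorem: immediately after the statement it defers to \cite[Theorem~6.2]{MikhalkinZharkov} and \cite[Theorem~3.4]{BakerFaber}. So there is no in-paper argument to compare your proposal against.

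That said, your sketch is essentially the Baker--Faber argument the paper cites. The discrete Hodge decomposition of $C^1(G,\R)$ into harmonic forms and coboundaries, the telescoping computation showing $A_{\Gamma,0}(\div f)=0$, and the recovery of $f$ from a $1$-chain orthogonal to $\Omega(\Gamma)$ are all present there in the same form. One small sharpening: for surjectivity you do not need a closedness argument. For each edge $e$ the divisors $p-q$ with $p,q\in e$ already map onto the entire $1$-parameter subgroup generated by the functional $\omega\mapsto\omega_e$ (you can take arbitrary integer multiples in $\Div_0$, so you get all of $\R\cdot\mathrm{ev}_e$ mod lattice, not just an interval); since these directions span $\Omega(\Gamma)^*$ the image is the whole torus by pure linear algebra. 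Your worry about integrality of slopes in the converse direction is the genuine crux, and your outline---subtract an integral cycle $c$ witnessing the vanishing in the torus, then observe that the resulting $1$-chain has integer boundary and is a gradient---is the right one; the integrality follows because on a spanning tree the slopes are determined by partial sums of the (integer) coefficients of $D$, and the remaining edges are corrected by the integer cycle $c$.
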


For a proof of Theorem \ref{thm:abel_jacobi_theorem_for_metric_graphs} and more details on the Abel-Jacobi theory of metric graphs, we refer the reader to \cite[Theorem 6.2]{MikhalkinZharkov} and \cite[Theorem 3.4]{BakerFaber}.

\section{Divisors on metrized complexes of Riemann surfaces}\label{section_mCRS} 
In this section, we recall the basic geometry of divisors on metrized complexes of Riemann surfaces, as introduced in \cite{AminiBaker}.

\begin{definition}\label{def_mCRS} A \textit{metrized complex of Riemann surfaces} (mCRS) is a tuple 
$$
\X = \big(G,\Gamma,\{X_v\}_{v\in V}, \{\mathcal{A}_v\}_{v \in V}\big)$$
consisting of 
\begin{itemize}
    \item a finite connected multigraph $G = (V,E)$ with vertex set $V \neq \emptyset$, edge set $E$ and a length function $\ell : E \to \R_{> 0}$ giving rise to a compact metric graph $\Gamma$ with model $G$,
    \item for all $v \in V$ a compact Riemann surface $X_v$, and
    \item for all $v \in V$ a finite set of distinct marked points $\mathcal{A}_v = \{x_v^e\}_{e \ni v} \subseteq X_v$ on $X_v$ together with a bijection between $\mathcal{A}_v$ and the edges $e \in E$ that are incident to $v$ (loops are counted twice).
\end{itemize}
 The \textit{geometric realization} $|\X|$ of $\X$ is defined to be the topological space given by taking the disjoint union of the Riemann surfaces $\bigsqcup_{v \in V} X_v$ and attaching the edges in $E$, thought of as intervals $I_e$ of length $\ell(e)$, to the marked points $x_v^e$ by gluing the endpoints of $I_e$ to $x_{v}^e$ and $x_{u}^e$, where $u$ and $v$ are the vertices incident to the edge $e$.
\end{definition}

By abuse of notation, we will frequently denote $|\X|$ by $\X$ as well. Furthermore, whenever needed, we will think of $V$ as being a subset of $\Gamma$ and of $E$ as being a subset of $\Gamma$ or $|\X|$. Under this identification, an abstract edge $e = \{v,u\} \in E$ shall always correspond to a closed subset $e$ of $\Gamma$ or $|\X|$, respectively. In particular, $e \cap \{v\} = \{v\} \subseteq \Gamma$, and $e \cap X_v = \{x_v^e\} \subseteq |\X|$, where we assume that $v \neq u$, i.e. $e$ is not a loop, to obtain the second equality.

\begin{example}[Riemann surfaces as special cases of mCRS]\label{ex:Riemann_surfaces_as_mCRS}
Every compact Riemann surface $X$ is canonically a mCRS 
$$
\X = \Big(\big(\{*\},\emptyset\big),\{*\},\{X\},\{\emptyset\}\Big)
$$
whose underlying (metric) graph is the one-point graph $\{\ast\}$.
\end{example}

The \textit{genus} $g(\X)$ of a mCRS $\X$ is defined to be the sum of the individual genera $g(X_v)$ of the Riemann surfaces $X_v$, for $v \in V$, and the genus $g(\Gamma)$ of $\Gamma$.

As for metric graphs and Riemann surfaces, a \textit{divisor}
$$
D = \sum_{p \in |\X|} D(p) \cdot p
$$
is an element of the free abelian group $\Div(\X)$ generated by the points of $|\X|$. In particular, $D(p) = 0$ for all but finitely many $p$. The degree of $D$ is $\deg(D) := \sum_{p \in |\X|} D(p)$. The $X_v$-\textit{part} $D_v \in \Div(X_v)$ of $D$ is given by
$$
D_v := \sum_{p \in X_v} D(p) \cdot p,
$$
and the $\Gamma$-\textit{part} $D_\Gamma \in \Div(\Gamma)$ is defined to be
$$
D_\Gamma := \sum_{p \in \Gamma \setminus V} D(p) \cdot p + \sum_{v \in V} \deg(D_v) \cdot v.
$$

A nonzero \textit{rational function} on $\X$ is given by a tuple $\mathfrak{f} = (\{f_v\}_{v \in V}, f_\Gamma)$ where $f_\Gamma$ is a rational function on $\Gamma$, and the $f_v$ are nonzero meromorphic functions on $X_v$. We call $f_\Gamma$ the $\Gamma$-part of $\mathfrak{f}$ and $f_v$ the $X_v$-part of $\mathfrak{f}$. The set of all rational functions is denoted by $\Rat(\X)$.

It is important to observe that this definition does not impose any compatibility conditions on the $X_v$-parts and the $\Gamma$-part of a rational function on $\X$.

As in the theory of Riemann surfaces or metric graphs, we can associate a divisor to a rational function on $\X$. To be precise, we define the divisor of the rational function 
$$
0 \neq \mathfrak{f} = (\{f_v\}_{v \in V}, f_\Gamma) \in \mathrm{Rat}(\X)
$$
by
$$
\div(\mathfrak{f}) := \sum_{p \in \X} \ord_p(\mathfrak{f}) \cdot p
$$
where
$$
\ord_p(\mathfrak{f}) := \left\{
\begin{array}{ll}
    \ord_p(f_v) & \textnormal{if } p \in X_v \setminus \mathcal{A}_v \\
    \mathrm{ord}_p(f_\Gamma) & \textnormal{if } p \in \Gamma \setminus V \\
    \ord_p(f_v) + \mathrm{slp}_e (f_\Gamma) & \textnormal{if } p = x_v^e \in \mathcal{A}_v.
\end{array}\right.
$$
Here, $\mathrm{slp}_e(f_\Gamma)$ denotes the outward pointing slope of $f_\Gamma$ at $v$ along $e$. A divisor $D \in \Div(\X)$ is called \textit{principal} if there exists a rational function $\mathfrak{f} \in \Rat(\X)$ such that $D = \div(\mathfrak{f})$, and two divisors are called linearly equivalent if their difference is principal. The set of principal divisors forms a subgroup of $\Div(\X)$, which we denote by $\mathrm{PDiv}(\X)$ .

If 
$$
0 \neq \mathfrak{f} = \big(\{f_v\}_{v \in V}, f_\Gamma\big) \in \Rat(\X)
$$
is a rational function on $\X$, we have $\deg(\mathfrak{f}) = \sum_{v \in V} \deg(\div(f_v)) + \deg(\div(f_\Gamma)) = 0$. Therefore, $\mathrm{PDiv}(\X) \subseteq \Div_0(\X)$. The reader should also note that while the equation
$$
\div(\mathfrak{f})_\Gamma = \div(f_\Gamma)
$$
always holds, we almost never have
$$
\div(\mathfrak{f})_v = \div(f_v).
$$
Hence, one cannot just apply the Abel-Jacobi theorem for Riemann surfaces to the $X_v$-part and the Abel-Jacobi theorem for metric graphs to the $\Gamma$-part of a divisor $D$ separately to decide whether $D$ is principal.

\begin{example}[Metric graphs as special cases of mCRS]\label{ex:metric_graphs_as_mCRS}
    Let $\X = \big(G,\Gamma,\{X_v\}_{v \in V},\{\mathcal{A}_v\}_{v \in V}\big)$ be a mCRS such that $g(X_v) = 0$ for each $v \in V$. Then, the divisor theories of $\X$ and $\Gamma$ are essentially equivalent since every pair of points on some $X_v$ is already linearly equivalent as the Riemann sphere has trivial Jacobian. In particular, two divisors $D,D' \in \Div(\X)$ are linearly equivalent if and only if $D_\Gamma$ and $D'_\Gamma$ are linearly equivalent. 
\end{example}

\section{The Jacobian of a metrized complex of Riemann surfaces}\label{section_JacobianmCRS}

In this section, we propose a natural definition for the Jacobian of a mCRS $\X$ that will make Theorem \ref{mainthm_AbelJacobimCRS} true. We think of $\X$ in terms of its geometric realization $\vert\X\vert$ and consider the group of singular $i$-chains on $\vert\X\vert$, which we denote by $C_i(\X)$. Write $d : C_i(\X) \to C_{i-1}(\X)$ for the usual boundary map. The ordinary homology groups of a topological space $X$ are denoted by $H_i(X)$, whereas $\widetilde{H}_i(X)$ denotes the $i$th reduced homology group of $X$.

\begin{proposition}\label{prop_seshomology}
    Let $\X$ be a mCRS. Then, there is a (not canonically) split short exact sequence
\begin{equation}\label{ses:homology_groups}\begin{tikzcd}
	0 & {\bigoplus_{v \in V} H_1(X_{v})} & {H_1(\mathfrak{X})} & {H_1(\Gamma)} & 0
	\arrow[from=1-1, to=1-2]
	\arrow["i_*", from=1-2, to=1-3]
	\arrow["j_*",from=1-3, to=1-4]
	\arrow[from=1-4, to=1-5]
\end{tikzcd},\end{equation}
where $i_*$ is the direct sum of the homology maps $H_1(X_v) \to H_1(\X)$ induced by inclusion, and $j_*$ is induced by the quotient map $\X \to \Gamma$.
\end{proposition}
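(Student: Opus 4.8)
The plan is to build the short exact sequence from a Mayer–Vietoris-type decomposition of $|\X|$, deforming it onto a simpler homotopy-equivalent model. First I would observe that $|\X|$ deformation retracts onto a space obtained by contracting each edge interval $I_e$ (minus small collar neighborhoods of its endpoints) — or, more cleanly, that the quotient map $j\colon |\X| \to \Gamma$ that collapses each $X_v$ to the vertex $v$ fits into a cofiber sequence whose long exact sequence in homology I can analyze. Concretely, let $A = \bigsqcup_{v\in V} X_v \subseteq |\X|$ be the disjoint union of the Riemann surfaces (thickened slightly to an open set), and let $B$ be an open neighborhood of the union of edges $\bigcup_{e\in E} I_e$, so that $A \cap B$ is homotopy equivalent to the disjoint union of the marked points $\bigsqcup_{v} \mathcal{A}_v$ (a finite discrete set, one point per edge-end) and $A \cup B = |\X|$. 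The nerve-type observation is that $B$ itself deformation retracts onto the combinatorial graph $G$ (each $I_e$ contracts to a point relative to its endpoints only after we also record the gluing, so $B \simeq G$ as a topological graph), and the inclusion $A\cap B \hookrightarrow B$ is, up to homotopy, the inclusion of the set of edge-ends into the graph.

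**The Mayer–Vietoris argument**

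I would then write down the Mayer–Vietoris sequence
\[
\cdots \to H_1(A\cap B) \to H_1(A)\oplus H_1(B) \to H_1(|\X|) \to \widetilde{H}_0(A\cap B) \to \widetilde H_0(A)\oplus \widetilde H_0(B)\to \cdots
\]
Since $A\cap B$ is a finite discrete set, $H_1(A\cap B) = 0$, so the left map kills nothing and we get an injection $H_1(A)\oplus H_1(B) \hookrightarrow H_1(|\X|)$; here $H_1(A) = \bigoplus_v H_1(X_v)$ and $H_1(B) = H_1(G) = H_1(\Gamma)$. Wait — this would give $H_1(|\X|) \supseteq \bigoplus_v H_1(X_v) \oplus H_1(\Gamma)$ already as a direct summand, which is stronger than asserted; I need to check the cokernel. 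The cokernel of $H_1(A)\oplus H_1(B)\to H_1(|\X|)$ injects into $\ker\big(\widetilde H_0(A\cap B)\to \widetilde H_0(A)\oplus\widetilde H_0(B)\big)$. Now $\widetilde H_0(A\cap B)$ is free of rank $(\#\text{edge-ends}) - 1 = 2|E| - 1$ (counting loops twice) — more precisely $\widetilde H_0$ of a set of $N$ points has rank $N-1$ — while $\widetilde H_0(A) = 0$ and $\widetilde H_0(B)=\widetilde H_0(G) = 0$ since $|\X|$, each $X_v$, and $G$ are connected. Hmm, so actually the connecting map has image all of $\widetilde H_0(A \cap B)$, contributing a free summand of rank $2|E|-|V|+1 - \ldots$; let me recount. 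The point is that $\widetilde H_0$ of the intersection feeds the loops of $\Gamma$. I think the cleanest route is to note $H_1(B,A\cap B) \cong H_1(\Gamma)$ fits better: rather than brute Mayer–Vietoris, use that $|\X| / A \simeq \Gamma / V \simeq \bigvee^{g(\Gamma)} S^1$, and run the long exact sequence of the pair $(|\X|, A)$.

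**The cleaner route via the pair $(|\X|, A)$**

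So the refined plan: set $A = \bigsqcup_v X_v$. The quotient $|\X|/A$ is homeomorphic to $\Gamma$ with all of $V$ collapsed to a single point, hence to a wedge of $g(\Gamma)$ circles, so $\widetilde H_1(|\X|/A) = H_1(\Gamma)$ (this uses $H_1(\Gamma) \cong \Z^{g(\Gamma)}$ and that collapsing the connected subcomplex $V$ doesn't change $H_1$... actually collapsing $V$ in $\Gamma$ does change $H_1$ — $\Gamma/V$ is a wedge of circles of rank $g(\Gamma) = |E|-|V|+1$, same as $H_1(\Gamma)$, good, and $H_2 = 0$). The pair $(|\X|, A)$ is a good pair (CW), so $H_n(|\X|,A) \cong \widetilde H_n(|\X|/A)$. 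The long exact sequence of the pair reads
\[
H_2(|\X|,A) \to H_1(A) \to H_1(|\X|) \to H_1(|\X|,A) \to H_0(A) \to H_0(|\X|).
\]
Here $H_2(|\X|,A) = \widetilde H_2(\Gamma/V) = 0$; $H_1(A) = \bigoplus_v H_1(X_v)$; $H_1(|\X|,A) = H_1(\Gamma)$; and the map $H_0(A) = \Z^{|V|} \to H_0(|\X|) = \Z$ is surjective with kernel $\Z^{|V|-1}$, so the connecting map $H_1(|\X|,A)\to H_0(A)$ has image landing in that kernel — but $H_1(\Gamma)$ is free and I claim this connecting map is zero. Indeed the composite $H_1(|\X|) \to H_1(|\X|,A) \to H_0(A)$ is part of the exact sequence and any $1$-cycle in $\Gamma$ lifts to a $1$-cycle in $|\X|$ (push the loop through arcs in the $X_v$ connecting the relevant marked points — possible since each $X_v$ is path-connected), so $j_*$ is surjective, forcing the connecting map $H_1(\Gamma) \to H_0(A)$ to vanish. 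This yields exactly the short exact sequence $0 \to \bigoplus_v H_1(X_v) \xrightarrow{i_*} H_1(|\X|) \xrightarrow{j_*} H_1(\Gamma) \to 0$, after identifying $H_1(|\X|,A) \xrightarrow{\sim} H_1(\Gamma)$ with $j_*$ (which holds because $j$ factors through $|\X|/A \to \Gamma/V$ and is a homotopy equivalence on $H_1$). For the splitting: a choice of basepoint $p_v \in X_v$ and a spanning tree of $G$ gives an explicit continuous section $\Gamma \to |\X|$ (realize each circle of $\Gamma$ by a loop passing through arcs $p_v \rightsquigarrow x_v^e$ inside the $X_v$), inducing a splitting of $j_*$; I would remark the splitting is non-canonical since it depends on these auxiliary choices.

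**Expected main obstacle.** The genuine content is the vanishing of the connecting homomorphism $H_1(\Gamma) \to H_0(A)$ — equivalently, the surjectivity of $j_*$ — and the identification of $H_1(|\X|, A)$ with $H_1(\Gamma)$ compatibly with $j_*$. Both are intuitively clear (lift loops of $\Gamma$ through the path-connected surfaces $X_v$) but require care about the CW/good-pair hypotheses and about exhibiting the lift as an actual cycle; the referee's simplification mentioned in the acknowledgements presumably streamlines precisely this. Everything else — computing $H_1$, $H_2$, $\widetilde H_0$ of the pieces — is routine.
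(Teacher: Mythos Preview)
Your approach via the long exact sequence of the pair $(|\X|, A)$ with $A = \bigsqcup_v X_v$ is sound in outline and partially overlaps with the paper (which uses exactly this pair to get injectivity of $i_*$), but you make a concrete error that breaks the argument as written. The quotient $|\X|/A$ collapses the \emph{disconnected} subspace $A$ to a \emph{single} point; equivalently $|\X|/A \cong \Gamma/V$ identifies \emph{all} vertices of $\Gamma$ to one point, so every edge becomes a loop and $\Gamma/V$ is a wedge of $|E|$ circles, not $g(\Gamma) = |E|-|V|+1$ circles. Thus $H_1(|\X|,A) \cong \Z^{|E|}$, not $H_1(\Gamma)$, and the connecting map $\partial\colon H_1(|\X|,A) \to H_0(A)$ is (up to sign) the simplicial boundary map $\Z^{|E|} \to \Z^{|V|}$ of $G$, which is nonzero whenever $|V| > 1$; its kernel is precisely $H_1(\Gamma)$. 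Your argument that ``$j_*$ surjective forces $\partial = 0$'' conflates the quotient map $q\colon |\X| \to |\X|/A$ with $j\colon |\X| \to \Gamma$: in fact $q = c \circ j$ where $c\colon \Gamma \to \Gamma/V$ collapses $V$, and $q_*$ is not surjective onto $\Z^{|E|}$.

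The fix is easy once the error is spotted: since $c_*\colon H_1(\Gamma) \hookrightarrow \Z^{|E|}$ is injective (from the pair $(\Gamma,V)$, using $H_1(V)=0$), one has $\ker(j_*) = \ker(q_*)$, which equals the image of $i_*$ by exactness; and the image of $q_*$ is $\ker\partial = c_*(H_1(\Gamma))$, so $j_*$ is surjective. The paper instead builds an explicit CW structure on $|\X|$ in which $\Gamma$ sits inside the $1$-skeleton as a homotopy section of $j$ (essentially your splitting construction --- no spanning tree is needed, only basepoints $p_v$ and $1$-cells from $p_v$ to each $x_v^e$ inside $X_v$); this yields split surjectivity of $j_*$ and middle exactness directly from the cellular chain complex, and invokes $H_2(|\X|, A) = 0$ only for the injectivity of $i_*$.
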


\begin{remark}
    The following proof is due to an anonymous referee, who read an earlier version of this paper, and replaces our previous more cumbersome proof.
\end{remark}

\begin{proof}
    We construct a $CW$-decomposition of $\X$ in the following manner: Choose basepoints $p_v \in X_v$ for each $v \in V$. The $p_v$ together with the marked points will be our $0$-cells. Now, for each $X_v$, if $X_v$ has genus $g$, choose $2g$ closed $1$-cells on $X_v$ with basepoint $p_v$ as they appear in the standard $CW$-decomposition of a genus $g$ surface. Furthermore, choose $|\mathcal A_v|$ additional $1$-cells connecting each marked point to $p_v$ and also make the edges of $\Gamma$ $1$-cells in the obvious way. Finally, add one $2$-cell for each Riemann surface $X_v$. This gives the desired $CW$-decomposition. Note that after forgetting the $1$-cells that are needed to construct each $X_v$, the $1$-skeleton of this decomposition becomes homeomorphic to $\Gamma$. This gives an inclusion $ \iota : \Gamma \hookrightarrow |\X|$, and if $j : |\X| \to \Gamma$ is the quotient map given by collapsing each Riemann surface to a vertex of $\Gamma$, the composition of cellular maps
    $$
    \Gamma \overset{\iota}{\hookrightarrow} |\X| \overset{j}{\rightarrow} \Gamma 
    $$
    is homotopic to the identity. Taking homology, we see that $j_*$ is split surjective. Note however that $\iota_*$ is not canonical. 

    Since any $1$-cell of $X_v$ gets mapped to a point under the composition $j_* \circ i_*$, we see that \ref{ses:homology_groups} is a complex. Furthermore, suppose we have a cycle of $1$-cells in $\X$ that becomes exact after applying $j_*$. As $\Gamma$ has no $2$-cycle, the cycle is in fact trivial as a chain on $\Gamma$. Therefore, the original cycle cannot be supported on any $1$-cell corresponding to an edge of $\Gamma$ and thus has to come from the $X_v$.

    Finally, we show that \ref{ses:homology_groups} is also exact on the left. Consider the good pair $(\X, \bigcup_{v \in V} X_v)$ and apply the long exact homology sequence to obtain
    $$
    \hdots \to H_2(\X,\bigcup_{v \in V} X_v) \rightarrow  H_1(\bigcup_{v \in V} X_v) \overset{i_*}{\rightarrow} H_1(\X) \to \hdots.
    $$
    Noting that all $2$-cells in the $CW$-decomposition of $\X$ are contained in $\bigcup_{v \in V} X_v$, gives that $H_2(\X,\bigcup_{v \in V} X_v) = 0$ as desired.
\end{proof}

\begin{definition}
    Let $\X$ be a mCRS. We define the $\R$-vector space $\Omega(\X)$ of \emph{hybrid $1$-forms} to be
    $$
    \Omega(\X) := \left( \bigoplus_{v \in V} \Omega(X_v) \right) \oplus \Omega(\Gamma).
    $$
    Its dual $\Omega^*(\X)$ is defined to be
    $$
    \Omega^*(\X) := \left( \bigoplus_{v \in V} \Omega^*(X_v) \right) \oplus \Omega^*(\Gamma),
    $$
    where $\Omega^*(X_v) = \Hom\big(\Omega(X_v),\C\big)$ and $\Omega^*(\Gamma) = \Hom\big(\Omega(\Gamma),\R\big)$. 
\end{definition}

We obtain the canonically split sequence
\begin{center}
\begin{tikzcd}
	0 & {\bigoplus_{v \in V} \Omega^*(X_v)} & {\Omega^*(\mathfrak{X})} & {\Omega^*(\Gamma)} & 0
	\arrow[from=1-2, to=1-3]
	\arrow[from=1-3, to=1-4]
	\arrow[from=1-4, to=1-5]
	\arrow[from=1-1, to=1-2].
\end{tikzcd}
\end{center}

\begin{definition}
    A path/singular $1$-simplex $\gamma : [0,1] \to \X$ is called \textit{good} if $\gamma|_{\gamma^{-1}(X_v)}$ is piecewise smooth for all $v \in V$, and $j_*(\gamma)$ is piecewise smooth, too. We denote the subgroup of $C_1(\X)$ generated by all good paths by $GC_1(\X)$ and call $dC_2(\X) \cap GC_1(\X)$ the \textit{group of good boundaries}. Elements of either subgroup will also be called good.
\end{definition}

Using a standard argument (and, in particular, the existence of the Lebesgue number), one can see that every homology class $[\gamma] \in H_1(\X)$ has a good representative. 
Therefore, we obtain an isomorphism
\begin{equation}\label{eq_homologywithgoodpaths}
\frac{\ker\big(d : GC_1(\X) \rightarrow C_0(\X)\big)}{GC_1(\X) \cap dC_2(\X)} \cong H_1(\X).
\end{equation}
We adopt the notation $\int_\sigma$ for the linear functional 
$$
\omega \longmapsto \int_\sigma \omega
$$
associated to a good path/singular $1$-simplex $\sigma$ on $\X$.

\begin{proposition}\label{prop:map_of_ses_to_define_jacobian}
    There is a well-defined homomorphism
    $$
    \chi : H_1(\X) \longrightarrow \Omega^*(\X)
    $$
    such that the following diagram commutes
\begin{center}\begin{tikzcd}
	0 & {\bigoplus_{v \in V} H_1(X_v)} & {H_1(\mathfrak{X})} & {H_1(\Gamma)} & 0 \\
	0 & {\bigoplus_{v \in V} \Omega^*(X_v)} & {\Omega^*(\mathfrak{X})} & {\Omega^*(\Gamma)} & 0
	\arrow[from=2-2, to=2-3]
	\arrow[from=2-3, to=2-4]
	\arrow[from=2-4, to=2-5]
	\arrow[from=2-1, to=2-2]
	\arrow[from=1-4, to=2-4]
	\arrow["\chi",from=1-3, to=2-3]
	\arrow[from=1-2, to=2-2]
	\arrow["i_*",from=1-2, to=1-3]
	\arrow["j_*",from=1-3, to=1-4]
	\arrow[from=1-4, to=1-5]
	\arrow[from=1-1, to=1-2].
\end{tikzcd}\end{center}
\end{proposition}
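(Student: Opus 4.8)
The plan is to define $\chi$ by integrating hybrid $1$-forms along good representatives of homology classes, to reduce the only nontrivial point — independence of the chosen good representative — to the (relative) de Rham theorems on the pieces $X_v$ and on $\Gamma$, and finally to read off commutativity of the two squares from the defining formula.

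First, for a good $1$-chain $\sigma$ on $\X$ and a hybrid $1$-form $\omega = \big(\{\omega_v\}_{v\in V},\omega_\Gamma\big)\in\Omega(\X)$, set
\[
\int_\sigma\omega \;:=\; \sum_{v\in V}\int_{\sigma|_{\sigma^{-1}(X_v)}}\omega_v \;+\; \int_{j_*\sigma}\omega_\Gamma .
\]
The summands $\int_{\sigma|_{\sigma^{-1}(X_v)}}\omega_v$ make sense because $\sigma$ is good on $X_v$, so $\sigma|_{\sigma^{-1}(X_v)}$ is a piecewise smooth $1$-chain on the Riemann surface $X_v$ and $\omega_v$ is a holomorphic, hence smooth and closed, $1$-form; and $\int_{j_*\sigma}\omega_\Gamma$ is the integral of the harmonic form $\omega_\Gamma$ against the piecewise smooth $1$-chain $j_*\sigma$ on $\Gamma$, as in Section~\ref{section_metricgraphs}. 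For $[\gamma]\in H_1(\X)$ I would pick a good representative $\gamma$ (possible by \eqref{eq_homologywithgoodpaths}) and set $\chi([\gamma]):=\big(\omega\mapsto\int_\gamma\omega\big)\in\Omega^*(\X)$; additivity of the integral in the chain argument makes $\chi$ a homomorphism, so everything reduces to showing $\int_\delta\omega=0$ for every good boundary $\delta\in GC_1(\X)\cap dC_2(\X)$ and every $\omega\in\Omega(\X)$.

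The $\Gamma$-part of this vanishing is immediate: $j_*$ is a chain map, so $j_*\delta$ is a boundary, hence nullhomologous, on $\Gamma$, and integration of the closed form $\omega_\Gamma$ over a piecewise smooth $1$-cycle on $\Gamma$ depends only on its class in $H_1(\Gamma)$ (Section~\ref{section_metricgraphs}). The $X_v$-part is the main obstacle, and I would handle it with the collapse map $q_v\colon|\X|\to X_v/\mathcal{A}_v$ that contracts the closed subset $\overline{|\X|\setminus X_v}$ — which meets $X_v$ exactly in $\mathcal{A}_v$ — to a point. Since $(X_v,\mathcal{A}_v)$ is a good pair, $q_v$ induces an isomorphism $H_1(X_v,\mathcal{A}_v)\xrightarrow{\ \sim\ }\widetilde{H}_1(X_v/\mathcal{A}_v)$; and for a good $1$-cycle $\gamma$ one checks from the definitions that, under this isomorphism, the class of the relative $1$-cycle $\gamma|_{\gamma^{-1}(X_v)}$ in $H_1(X_v,\mathcal{A}_v)$ corresponds to $q_{v*}[\gamma]$ — the part of $\gamma$ mapping outside $X_v\setminus\mathcal{A}_v$ is carried by $q_v$ to the basepoint and contributes nothing. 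On the other hand, because $\omega_v$ is closed and $\mathcal{A}_v$ is finite, Stokes' theorem on $X_v$ together with the vanishing of $\int\omega_v$ on the (degenerate) $1$-chains supported in the finite set $\mathcal{A}_v$ shows that $\alpha\mapsto\int_\alpha\omega_v$ descends to a well-defined linear functional on $H_1(X_v,\mathcal{A}_v)$. Composing, $[\gamma]\mapsto\int_{\gamma|_{\gamma^{-1}(X_v)}}\omega_v$ is the composite of the homomorphism $q_{v*}\colon H_1(\X)\to H_1(X_v,\mathcal{A}_v)$ with this functional; in particular it vanishes on the class of any good boundary. This establishes well-definedness of $\chi$.

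Finally, commutativity of the two squares is read off from the defining formula, with the outer vertical maps being the usual period maps ($c\mapsto(\omega_v\mapsto\int_c\omega_v)$ on each $H_1(X_v)$, and the injection $H_1(\Gamma)\hookrightarrow\Omega^*(\Gamma)$ from Section~\ref{section_metricgraphs}). For the right-hand square, the surjection $\Omega^*(\X)\to\Omega^*(\Gamma)$ is projection onto the $\Omega^*(\Gamma)$-summand, which sends $\chi([\gamma])$ to $\omega_\Gamma\mapsto\int_{j_*\gamma}\omega_\Gamma$, i.e. to the period functional of $j_*[\gamma]=[j_*\gamma]$. For the left-hand square, a class $c\in H_1(X_v)$ has a good representative supported in $X_v$; its restriction to $X_w$ is empty for $w\neq v$ and $j_*$ of it is constant, so $\chi(i_*c)$ lies in the $v$-th summand $\Omega^*(X_v)\subseteq\bigoplus_w\Omega^*(X_w)\subseteq\Omega^*(\X)$ and there equals $\omega_v\mapsto\int_c\omega_v$, as required. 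I expect the only genuinely geometric input to be the well-definedness argument for the $X_v$-part; the rest is either formal or a direct appeal to the Riemann-surface and metric-graph theories already recalled.
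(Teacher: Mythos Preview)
Your argument is correct and takes a genuinely different route from the paper on the one substantive point, the $X_v$-part of the vanishing on good boundaries. The paper argues by barycentric subdivision: it covers $|\X|$ by small simply connected open sets, subdivides a given singular $2$-simplex so that each subsimplex lands in a single chart, replaces the (possibly non-good) inner edges of the subdivision by nearby good paths, and then verifies by a three-case analysis (inside some $X_v$; inside an edge; across a marked point $x_v^e$) that the $\chi_X$-contribution of each small triangular loop vanishes. You instead factor $[\gamma]\mapsto\int_{\gamma|_{\gamma^{-1}(X_v)}}\omega_v$ through the collapse $q_{v*}\colon H_1(\X)\to\widetilde H_1(X_v/\mathcal A_v)\cong H_1(X_v,\mathcal A_v)$ followed by the relative period functional, which is well defined by Stokes and the triviality of integrating over the finite set $\mathcal A_v$. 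Your route is more conceptual and skips all the subdivision bookkeeping; it also explains structurally why the marked points are harmless. The paper's route is more self-contained and works directly with integrals rather than with relative homology or quotient spaces. One small technical point worth making explicit in your write-up: the factorization requires $\gamma|_{\gamma^{-1}(X_v)}$ to be an honest relative singular $1$-cycle, hence $\gamma^{-1}(X_v)$ to be a finite union of closed intervals for each good simplex; this is the natural reading of ``piecewise smooth'' in the definition of a good path, but it is used tacitly.
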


\begin{proof}
    We start by constructing a homomorphism $\chi : GC_1(\X) \rightarrow \Omega^*(\X)$. Let $n := |V|$. For $\sum_i \sigma_i \in GC_1(\X)$, we define $\chi$ by
    $$
    \sum_i \sigma_i \mapsto \left(\underbrace{\left(\sum_i \int_{\sigma_i \cap X_{v_1}},\hdots,\sum_i \int_{\sigma_i \cap X_{v_n}}\right)}_{\chi_X :=},\underbrace{\left(\sum_i \int_{j_*(\sigma_i)}\right)}_{\chi_\Gamma :=}\right),
    $$
    where 
    $$
    \int_{\sigma_i \cap X_{v_i}}
    $$
    means that we only integrate along the parts of $\sigma_i$ that are in $X_{v_i}$. This definition shows the importance of introducing good paths for otherwise the various integrals would not be well-defined.

    We claim that, using the isomorphism \eqref{eq_homologywithgoodpaths}, the homomorphism $\chi$ descends to a map $H_1(\X) \to \Omega^*(\X)$. Using linearity, it will be enough to show $\chi(d\sigma) = 0$ for all good boundaries $d\sigma \in GC_1(\X) \cap dC_2(\X)$, and we can show this for the $\chi_\Gamma$-part and the $\chi_X$-part of $\chi$ separately.
    
    Note that for the $\chi_\Gamma$-part, this is an immediate consequence of the analogous statement for metric graphs and the commutativity of
    \begin{center}
    \begin{tikzcd}
    	{C_2(\mathfrak{X})} & {C_1(\mathfrak{X})} \\
    	{C_2(\Gamma)} & {C_1(\Gamma)}
    	\arrow["d", from=1-1, to=1-2]
    	\arrow["d",from=2-1, to=2-2]
    	\arrow["{j_*}"', from=1-1, to=2-1]
    	\arrow["{j_*}"', from=1-2, to=2-2]
    \end{tikzcd}
    \end{center} 
    as these statements together imply
    $$
    \chi_\Gamma(d\sigma) = \int_{j_*(d\sigma)} = \int_{d(j_*(\sigma))} \equiv 0.
    $$
    We now consider the $\chi_X$-part. Instead of working with arbitrary good boundaries, we want to reduce the situation to good boundaries already contained in smaller open subsets of $\X$ that are easier to study. To this end, choose an open covering $\mathcal{U}$ of $\X$ by simply connected open sets such that for every $x_v^e$ there is exactly one $U_{v,e} \in \mathcal{U}$ with $x_v^e \in U_{v,e}$. It is possible to choose $\mathcal{U}$ in such a way that the sets $U_{v,e} \cap X_v$ are pairwise disjoint and each of them is a connected domain of some chart of $X_v$. 
    The preimages of these sets under $\sigma$ give an open cover $\mathcal{U}'$ of the standard simplex $\Delta^2 = \mathrm{conv}\{e_0,e_1,e_2\}$. 

    We can find a barycentric subdivision of $\Delta^2$ such that the following two conditions hold for all simplices $\widetilde{\Delta}^2$ of the subdivision:
    \begin{itemize}
        \item[(1)] $\sigma(\widetilde{\Delta}^2) \subseteq U$ for some $U \in \mathcal{U}$.
        \item[(2)] If a point of $\widetilde{\Delta}^2$ is mapped to $x_v^e$ by $\sigma$, $\sigma(\widetilde{\Delta}^2)$ is contained in $U_{v,e}$.
    \end{itemize}
    This is an easy consequence of the existence of a Lebesgue number for the covering $\mathcal{U}'$ of the compact set $\Delta^2$.
    
    We now have to deal with the technical issue that the paths associated to the edges of the simplices of the subdivision are no longer necessarily good (unless the edge in question is already part of an edge of the big simplex $\Delta^2$). Therefore, these edges have to be modified slightly if we want to integrate along them. An inner edge $k$ with endpoints $P$ and $Q$ is part of two $2$-simplices $\widetilde{\Delta}^2_1$ and $\widetilde{\Delta}^2_2$ of the subdivision, and the orientations of $k$ induced by applying the boundary operator $d$ to $\widetilde{\Delta}_1^2$ and $\widetilde{\Delta}^2_2$ are opposite to each other. Let $U_1,U_2 \in \mathcal{U}$ be two open sets such that $\widetilde{\Delta}^2_i \subseteq U_i, \ i=1,2$. Their existence is guaranteed by (1). Note that the path $\sigma_k$ is located entirely in $U_1 \cap U_2$ and connects $\sigma(P)$ and $\sigma(Q)$. Choose one of the two possible orientations of $k$ to obtain an oriented edge from $P$ to $Q$ (or, alternatively, from $Q$ to $P$). We again denote this oriented edge by $k$ and write $\overline{k}$ for the same edge with its orientation reversed. It is now clearly possible to choose a good path $\gamma_k$ which connects $\sigma(P)$ to $\sigma(Q)$ and is still located entirely in $U_1 \cap U_2$. We set $\gamma_{\overline{k}} := \overline{\gamma}_k$ where $\overline{\gamma}_k$ is the inverse path to $\gamma_k$. For an outer edge of the subdivion, whose associated path in $\X$ is already good, we simply set $\gamma_k := \sigma|_k$, where the orientation of $\gamma_k$ is induced by the orientation of the big outer edge of which $k$ is a part. We now have
    \begin{align*}
        \chi(d \sigma) &= \sum_{\substack{k \\ \textnormal{outer edge}}} \chi(\gamma_k) + \sum_{\substack{k \\ \textnormal{inner edge}}} \left( \chi(\gamma_k) + \chi(\gamma_{\overline{k}})\right) \\
        &= \sum \chi(\gamma_{k_1} \cdot \gamma_{k_2} \cdot \gamma_{k_3}),
    \end{align*}
    where $\cdot$ denotes the composition of paths and the last sum is taken over all $2$-simplices $\widetilde{\Delta}^2$ of the subdivision with oriented edges $k_1,k_2$ and $k_3$.

    \begin{figure}[ht]
        \centering
        \begin{tikzpicture}[x=0.9cm,y=0.9cm]
    
            \draw[rounded corners,thick,->,postaction={
                decorate,
                decoration={
                    markings,
                    mark=at position 0.5 with \coordinate (z);
                }
            }] (-1,7) .. controls (0.66,7.5) and (2.33,7.5) .. (4,7);
            \draw (z) node[above] {$\sigma$};
              
            \begin{scope}[xshift=5cm,yshift=0cm]
            \draw[very thick,rounded corners=3mm] (0.5,0.5)--(1.5,1.2)--(3,1)--(3.1,3)--(3.5,4)--(3.2,5)--(0,6.5)--(-2,6.3)--(-3.8,4)--(-4,2)--cycle;
            \draw (2.5,6) node[right] {$\X$};
            \draw (2,4.2) node[right] {$U_1$};
            \draw (2,2.5) node[right] {$U_2$};
            \draw[dotted,thick] (0,3) circle [radius=2];
            \draw[dotted,thick] (0,4) circle [radius=2];
            \fill[font=\tiny] (-1,3.5) circle (1.5pt) node[above] {$\sigma(P)$} (1,3.5) circle (1.5pt) node[above] {$\sigma(Q)$};
            \draw (-1,3.5) -- (-0.1,3) -- (0,3.9) -- (0.5,3.2) -- (1,3.5);
            
            \path [rounded corners,thick,draw=black,postaction={on each segment={mid arrow=black}}] (-1,3.5) .. controls (-0.5,3.2) and (0.5,3.9) .. (1,3.5);
            
            \end{scope}
      
          \coordinate (first) at (-3,6.9282);
          \coordinate (second) at (-7,0);
          \coordinate (third) at (1,0);
        
          \node [node font=\tiny, above]       at (first) {$e_2$};
          \node [node font=\tiny, below left]  at (second) {$P = e_0$};
          \node [node font=\tiny, below right] at (third) {$e_1$};
          
          \draw [line width=1pt,black] (first.south) -- (second.north east) -- (third.north west) -- cycle;
          \draw [line width=1pt,black] (first) -- (barycentric cs:first=0,second=1,third=1);
          \draw [line width=1pt,black] (second) -- (barycentric cs:first=1,second=0,third=1);
          \draw [line width=1pt,black] (third) -- (barycentric cs:first=1,second=1,third=0);
        
          \node [node font=\tiny, above left] at (barycentric cs:first=1.2,second=1,third=1) {Q};
          \coordinate (middle) at (barycentric cs:first=1,second=1,third=1);
          \coordinate (first1) at (barycentric cs:first=0,second=1,third=1);
          \coordinate (second1) at (barycentric cs:first=1,second=0,third=1);
          \coordinate (third1) at (barycentric cs:first=1,second=1,third=0);
    
            \draw[line width=1pt,-{Stealth}] (barycentric cs:first=0,second=1,third=0) -- (barycentric cs:second=1,middle=3.2,third1=0);
            \node [node font=\tiny, above left] at (barycentric cs:second=1,middle=3.3,third1=0.1) {$k$};
            
            \draw[line width=1pt,-{Stealth}] (middle) -- (barycentric cs:second=2.9,middle=1,third1=0);
            \node [node font=\tiny,below right] at (barycentric cs:second=2.8,middle=1.2,first1=0.1) {$\overline{k}$};
            
            \node [node font=\tiny] at (barycentric cs:first=0.2,second=1,third=0.5) {$\widetilde{\Delta}^2_1$};
            
            \node [node font=\tiny] at (barycentric cs:first=0.5,second=1,third=0.2) {$\widetilde{\Delta}^2_1$};
            
            \draw[line width=1pt,-{Stealth}] (middle) -- (barycentric cs:second=0,middle=1,third1=2);
            
            \draw[line width=1pt,-{Stealth}] (third1) -- (barycentric cs:second=0,middle=2,third1=1);
            
            \path [draw=black,postaction={on each segment={mid arrow=black}}]
            (first) -- (third1);
            
            \path [draw=black,postaction={on each segment={mid arrow=black}}]
            (third1) -- (second);
            
            \path [draw=black,postaction={on each segment={mid arrow=black}}]
            (second) -- (first1);
            
            \path [draw=black,postaction={on each segment={mid arrow=black}}]
            (first1) -- (third);
            
            \path [draw=black,postaction={on each segment={mid arrow=black}}]
            (third) -- (second1);
            
            \path [draw=black,postaction={on each segment={mid arrow=black}}]
            (third) -- (second1);
            
            \path [draw=black,postaction={on each segment={mid arrow=black}}]
            (second1) -- (first);
            
            \draw[line width=1pt,-{Stealth}] (barycentric cs:first=1,second=0,third=0) -- (barycentric cs:first=1,middle=3.2,second1=0);
            
            \draw[line width=1pt,-{Stealth}] (middle) -- (barycentric cs:first=2.9,middle=1,third1=0);
            
            \draw[line width=1pt,-{Stealth}] (barycentric cs:first=0,second=0,third=1) -- (barycentric cs:third=1,middle=3.2,second1=0);
            
            \draw[line width=1pt,-{Stealth}] (middle) -- (barycentric cs:third=2.9,middle=1,third1=0);
            
            \draw[line width=1pt,-{Stealth}] (middle) -- (barycentric cs:first=0,middle=1,second1=2);
            
            \draw[line width=1pt,-{Stealth}] (second1) -- (barycentric cs:first=0,middle=2,second1=1);
            
            \draw[line width=1pt,-{Stealth}] (middle) -- (barycentric cs:second=0,middle=1,first1=2);
            
            \draw[line width=1pt,-{Stealth}] (first1) -- (barycentric cs:second=0,middle=2,first1=1);
            
            \end{tikzpicture}
        \caption{Barycentirc subdivision of $\Delta^2$ under the assumption that one iteration is already sufficient. The zig-zag line from $\sigma(P)$ to $\sigma(Q)$ depicts $\sigma|_k$ whereas the smooth path depicts a possible choice of $\gamma_k$.}
    \end{figure}
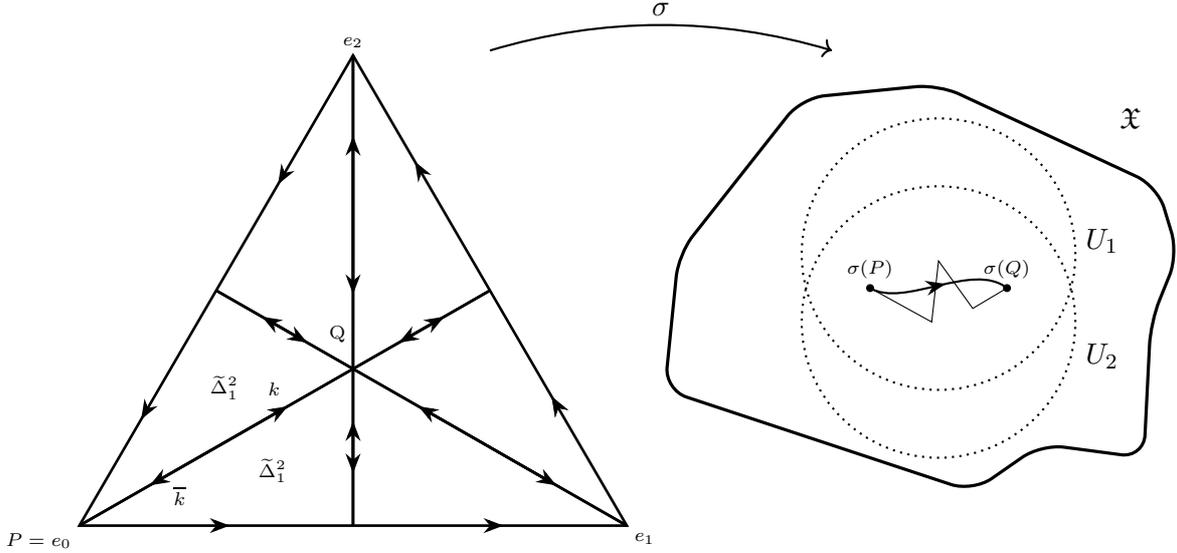

    We need to show that each summand in the last sum is already trivial. By construction, the path $\gamma := \gamma_{k_1} \cdot \gamma_{k_2} \cdot \gamma_{k_3}$ is a good closed path in one of the $ U \in \mathcal{U}$. Since $U$ is simply connected, $\gamma$ is null-homotopic. There are three possible cases:
    \begin{itemize}
        \item $U \subseteq X_v$ for some $v \in V$. Then, $\chi_X(\gamma) = 0$ as path integrals on Riemann surfaces are homotopy invariant and $\gamma$ is null-homotopic.
        \item $U \subseteq \X \setminus \bigcup_{v \in V} X_v$. In this case, $\chi_X(\gamma) = 0$ is trivially true since $\gamma \cap X_v = \emptyset$ for all $v \in V$.
        \item $U = U_{v,e}$ for some $v \in V$ and $e \in E$. Let $Y$ be the topological space obtained by taking an open simply connected neighborhood $V \subseteq \C$ of $0$ and gluing the interval $[0,\epsilon), \ \epsilon > 0$ to it by identifying the $0$ points with each other. As the $U_{v,e}$ can be chosen to be of this exact form, we have now reduced the problem to considering null-homotopic good paths $\gamma : [0,1] \to Y$. We have to show that $\gamma \cap V$ can also be parametrized by a good null-homotopic path. As in the first case, this will imply
        $$
        \int_{\gamma \cap V} \equiv 0.
        $$
        Without loss of generality, we may assume that $\gamma$ is not entirely contained in $(0,\epsilon)$. Otherwise, $\gamma \cap V = \emptyset$ and the above integral is identically $0$ for trivial reasons. Let $\pi : Y \to Y / [0,\epsilon) \cong V$ be the quotient map. Then, $\gamma \cap V$ is parametrized by $\pi_*(\gamma)$ and this path is null-homotopic since $V$ is simply connected.
    \end{itemize}

    The commutativity of the diagram is obvious from the definition of $\chi$.
\end{proof}

Applying the snake lemma, we obtain the following short exact sequence
\begin{equation}\label{eq:ses_of_jacobians}
\begin{tikzcd}
	0 & {\bigoplus_{v \in V}\Jac(X_v)} & {\Omega^*(\mathfrak{X})/H_1(\mathfrak{X})} & {\Jac(\Gamma)} & 0
	\arrow[from=1-2, to=1-3]
	\arrow[from=1-3, to=1-4]
	\arrow[from=1-4, to=1-5]
	\arrow[from=1-1, to=1-2].
\end{tikzcd}\end{equation}
This motivates our definition of the Jacobian of $\X$.

\begin{definition}[Jacobian of a mCRS]
    We call
    $$
    \Jac(\X) := \Omega^*(\X)/H_1(\X)
    $$
    the Jacobian of the mCRS $\X$.
\end{definition}

\begin{remark}
    Our definition is compatible with the definition of the Jacobian of Riemann surfaces and metric graphs: If $\X$ is essentially a Riemann surface $X$ in the sense of Example \ref{ex:Riemann_surfaces_as_mCRS}, the Jacobian of $\X$ and the Jacobian of $X$ are canonically isomorphic. If $\X$ is essentially a metric graph in the sense of Example \ref{ex:metric_graphs_as_mCRS}, we likewise have $\Jac(\X) \cong \Jac(\Gamma)$.
\end{remark}

\begin{remark}
    Since the group on the left of \eqref{eq:ses_of_jacobians} is divisible, the short exact sequence splits and we conclude that abstractly
    $$
    \Jac(\X) \cong \bigoplus_{v \in V} \Jac(X_v) \oplus \Jac(\Gamma).
    $$
    Given that we also have 
    $$
    H_1(\X) \cong \bigoplus_{v \in V} H_1(X_v) \oplus H_1(\Gamma),
    $$
    one might be tempted to define $\chi$ in Proposition \ref{prop:map_of_ses_to_define_jacobian} as the direct sum of
    $$
    \bigoplus_{v \in V} H_1(X_v) \hookrightarrow \bigoplus_{v \in V} \Omega^*(X_v)
    $$
    and
    $$
    H_1(\Gamma) \hookrightarrow \Omega^*(\Gamma)
    $$
    so that one can take
    $$
    \left(\bigoplus_{v \in V} \Omega^*(X_v) \oplus \Omega^*(\Gamma) \right) \Big/ \left( \bigoplus_{v \in V} H_1(X_v,\Z) \oplus H_1(\Gamma,\Z) \right) = \bigoplus_{v \in V} \Jac(X_v) \oplus \Jac(\Gamma)
    $$
    directly as a definition for the Jacobian of a mCRS. However, since the splitting of the homology groups depends on the choice of an embedding $\Gamma \hookrightarrow |\X|$, our Jacobian is in fact not canonically split. The advantage of our definition is that $\chi$ is defined in a canonical way that allows for the direct construction of an Abel-Jacobi map, as carried out in the next section. 
\end{remark}

\section{Abel--Jacobi theory for metrized complexes of Riemann surfaces}

Fix $\X$ to be a mCRS. From now on, all paths in $\X$ are assumed to be good. We therefore drop the word \emph{good} and simply talk about paths.

\begin{definition}[Abel-Jacobi map]
    Choose a basepoint $p_0 \in \X$. For a point $p \in \X$, choose a path $\gamma_p$ from $p_0$ to $p$. We define the \emph{Abel-Jacobi map}
    \begin{equation*}\begin{split}
    A_\X : \X &\longrightarrow \Jac(\X)\\
    p &\longmapsto \left[\left(\int_{\gamma_p \cap X_{v_1}},\hdots, \int_{\gamma_p \cap X_{v_n}}\right),\left( \int_{j_*(\gamma_p)}\right)\right],
    \end{split}\end{equation*}
    where $j_*$ again denotes the pushforward along the quotient map $\X \to \Gamma$.
\end{definition}

The same argument as for the classical Abel-Jacobi map ensures that this map is well-defined, i.e. independent of the choice for $\gamma_p$. We call $A_\X$ the Abel-Jacobi map of $\X$.

As in the theory of Riemann surfaces, we can extend $A_\X$ linearly to a map $\Div(\X) \to \Jac(\X)$ by setting
$$
A_\X\Big(\sum n_p p\Big) := \sum n_p A_\X(p) \ .
$$
Its restriction to $\Div_0(\X)$, which we denote by $A_{\X,0}$, does no longer depend on the choice of $p_0$ (see \cite[Lemma VIII.2.1]{Miranda_RS} for an analogous argument in the case of Riemann surfaces).

We now start with the proof of Theorem \ref{mainthm_AbelJacobimCRS}, an Abel-Jacobi theorem for mCRS, which we restate here.

\begin{theorem}\label{thm:abel_jacobi_for_mCRS}
    There is an isomorphism of short exact sequences given by the following commutative diagram

\[\begin{tikzcd}
	0 & {\bigoplus_{v \in V}\mathrm{Pic}^0(X_v)} & {\mathrm{Pic}^0(\X)} & {\mathrm{Pic}^0(\Gamma)} & 0 \\
	0 & {\bigoplus_{v \in V} \Jac(X_v)} & {\Jac(\X)} & {\Jac(\Gamma)} & 0
	\arrow[from=1-1, to=1-2]
	\arrow[from=1-2, to=1-3]
	\arrow["A_{X,0} = \bigoplus_{v \in V} A_{v,0}",from=1-2, to=2-2]
	\arrow["\small{{[D] \mapsto [D_\Gamma]}}"',from=1-3, to=1-4]
	\arrow["{A_{\X,0}}", from=1-3, to=2-3]
	\arrow[from=1-4, to=1-5]
	\arrow["{A_{\Gamma,0}}", from=1-4, to=2-4]
	\arrow[from=2-1, to=2-2]
	\arrow[from=2-2, to=2-3]
	\arrow[from=2-3, to=2-4]
	\arrow[from=2-4, to=2-5]
\end{tikzcd}.\]
Furthermore, both rows are right-split, and the splitting only depends on a set of basepoints $\mathcal{B}$, where we choose one basepoint $p_v \in X_v$ for each Riemann surface $X_v$. The splitting map in the top row is then given by
$$
u : \Pic^0(\Gamma) \to \Pic^0(\X), \ \left[\sum_j a^j\right] \mapsto \left[\sum_j b^j\right],
$$
where
$$
b^j = \begin{cases}
    a^j & \textnormal{if } a^j \in \Gamma \setminus V \\
    p_v & \textnormal{if } a^j = v \in V.
\end{cases}
$$
\end{theorem}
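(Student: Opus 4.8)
The plan is to realize the displayed diagram as an isomorphism of short exact sequences and read the corollary off it: one checks that the top row is exact, that the two squares commute, and that the two outer vertical maps are isomorphisms, and then the five lemma forces the middle map $A_{\X,0}$ to be an isomorphism. The outer maps are isomorphisms by the results we are allowed to assume---$\bigoplus_v A_{v,0}$ by the classical Abel--Jacobi theorem for compact Riemann surfaces, and $A_{\Gamma,0}$ by Theorem~\ref{thm:abel_jacobi_theorem_for_metric_graphs}. So the substance is (a) exactness of the top row, (b) that $A_{\X,0}$ descends to $\Pic^0(\X)$ together with commutativity of the squares, and (c) the splitting.

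For (a): the left map sends $[D_v]$ to the class of $D_v$ regarded as a divisor on $\X$; it is well-defined because a rational function $f_v$ on $X_v$ extends to $\X$ by taking it to be $1$ on the other surfaces and constant on $\Gamma$, and it is injective because $\sum_v D_v=\div(\mathfrak f)$ forces $(\div\mathfrak f)_\Gamma=\div(f_\Gamma)=0$, so $f_\Gamma$ is harmonic on the connected graph $\Gamma$, hence constant with vanishing slopes, whence $D_v=\div(f_v)$ for each $v$. The right map $[D]\mapsto[D_\Gamma]$ is well-defined since $(\div\mathfrak f)_\Gamma=\div(f_\Gamma)$, and surjective since $\tilde u(E):=\sum_{p\notin V}E(p)\,p+\sum_v E(v)\,p_v$ has $\Gamma$-part $E$; exactness in the middle holds because if $D_\Gamma=\div(g)$ then $D-\div\big((\{1\}_v,g)\big)$ has trivial $\Gamma$-part, hence lies in $\bigoplus_v\Div_0(X_v)$.

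For (b), the crucial claim is $A_{\X,0}(\div\mathfrak f)=0$. Factor $\mathfrak f=(\{f_v\}_v,1)\cdot(\{1\}_v,f_\Gamma)$, so $\div\mathfrak f=\sum_v\div(f_v)+D^{(2)}$ with $D^{(2)}:=\div\big((\{1\}_v,f_\Gamma)\big)=\sum_{p\notin V}\ord_p(f_\Gamma)\,p+\sum_v\sum_{e\ni v}\mathrm{slp}_e(f_\Gamma)\,x_v^e$. The first summand is a sum of degree-zero divisors supported on the $X_v$; evaluating $A_{\X,0}$ on it with basepoint $p_v$ and paths inside $X_v$ shows it equals $\bigoplus_v A_{v,0}(\div f_v)=0$ by classical Abel---and the same computation shows the restriction of $A_{\X,0}$ to $\bigoplus_v\Pic^0(X_v)$ is $\bigoplus_v A_{v,0}$, i.e.\ the left square commutes. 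For $D^{(2)}$: refine the model of $\Gamma$ so that $f_\Gamma$ is affine on each edge, and let $\sigma$ be the good $1$-chain on $|\X|$ supported on the (geometric) edges given by $\sigma=\pm\sum_e m_e\,[e]$, where $m_e$ is the slope of $f_\Gamma$ along $e$ and the signs/orientations are chosen so that $d\sigma=D^{(2)}$ (the only delicate point is matching the coefficients at the marked points $x_v^e$). Then $\chi_X(\sigma)=0$, since $\sigma$ meets no $X_v$ in positive measure, and $\chi_\Gamma(\sigma)=\int_{j_*\sigma}$ is the functional $\omega\mapsto\pm\sum_e m_e\omega_e\ell(e)$; since $m_e\ell(e)=f_\Gamma(e^-)-f_\Gamma(e^+)$ this equals $\pm\sum_v f_\Gamma(v)\big(\sum_{e^-=v}\omega_e-\sum_{e^+=v}\omega_e\big)=0$ for every harmonic $\omega$. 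Hence $\chi(\sigma)=0$ and $A_{\X,0}(D^{(2)})=[\chi(\sigma)]=0$. The right square commutes because $j_*D=D_\Gamma$ as divisors and the $\Gamma$-component of $A_{\X,0}(D)$ is by construction $A_{\Gamma,0}(j_*D)$.

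Now (c): since $\bigoplus_v\Jac(X_v)$ is divisible, it is an injective $\Z$-module, so the bottom row splits, hence so does the top (they are isomorphic). To attach a splitting to a set of basepoints $\mathcal B=\{p_v\}$, recall from the proof of Proposition~\ref{prop_seshomology} that $\mathcal B$ produces an embedding $\iota_{\mathcal B}\colon\Gamma\hookrightarrow|\X|$ with $j\circ\iota_{\mathcal B}\simeq\mathrm{id}_\Gamma$; this is a homotopy section of $j$, so $\iota_{\mathcal B,*}$ splits the homology sequence, and passing to $\Omega^*/H_1$ splits the bottom row. Transporting through the vertical isomorphisms gives the splitting of the top row, and unwinding definitions---using that $\iota_{\mathcal B,*}$ sends the divisor $\sum_j a^j$ to $\sum_j b^j$ and that $A_{\X,0}$ is computed by integrating over $\iota_{\mathcal B}$-images of paths---identifies it with the stated map $u$. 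I expect the main obstacle to be step (b), the vanishing $A_{\X,0}(\div\mathfrak f)=0$: the Riemann-surface contributions and the graph contribution must be killed by genuinely different devices---classical Abel (adjust each chain inside $X_v$ by a $1$-cycle to make its periods vanish) for the former, and the fact that the slope chain of $f_\Gamma$ pairs trivially with every harmonic form for the latter---while keeping the chain $\sigma$ good so that $\chi$ is defined on it. Matching the basepoint-induced splitting with the explicit formula $u$ is the other place requiring some care.
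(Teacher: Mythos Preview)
Your argument for the isomorphism of short exact sequences is correct and runs parallel to the paper's, with two organisational differences worth noting. First, you use the five lemma: after showing the top row is exact and that $A_{\X,0}$ kills $\mathrm{PDiv}(\X)$ (the ``only if'' direction), the five lemma hands you injectivity of the induced map $\Pic^0(\X)\to\Jac(\X)$ for free. The paper instead proves both implications $D\in\mathrm{PDiv}(\X)\Leftrightarrow A_{\X,0}(D)=0$ directly (Proposition~\ref{prop:abel_jacobi_maps_commute} plus the argument following it) and then applies the snake lemma to the $\Div_0$--level diagram. Your route is marginally more economical. Second, and more interestingly, your treatment of the key vanishing $A_{\X,0}\big(\div((\{1\}_v,f_\Gamma))\big)=0$ is genuinely simpler than the paper's: you build the slope $1$--chain $\sigma$ on the geometric edges with $d\sigma=D^{(2)}$ and observe $\chi(\sigma)=0$ directly, whereas the paper (Proposition~\ref{prop:extended_rational_function_maps_to_zero}) first invokes the decomposition of $f_\Gamma$ into weighted chip--firing moves (Lemma~\ref{lemma:weighted_chip_firing_moves}) and then carries out a two--case analysis depending on whether the line segments $L_j$ meet vertices of the model. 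Both end with the same harmonicity computation, but your shortcut avoids the chip--firing lemma entirely.

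Your treatment of the splitting in~(c), however, is too quick. The sentence ``$\iota_{\mathcal B,*}$ splits the homology sequence, and passing to $\Omega^*/H_1$ splits the bottom row'' hides a real issue: the canonical inclusion $\Omega^*(\Gamma)\hookrightarrow\Omega^*(\X)$ is \emph{not} compatible with $\chi\circ\iota_{\mathcal B,*}$, because for a cycle $\gamma$ in $\Gamma$ the lift $\iota_{\mathcal B}(\gamma)$ traverses the connecting arcs in each $X_v$ and therefore $\chi(\iota_{\mathcal B,*}[\gamma])$ has nonzero $X_v$--components. So a section of the Jacobian sequence does not drop out of the two splittings formally; one has to build it by hand. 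The paper does this by defining $v\colon\Jac(\Gamma)\to\Jac(\X)$ via $[\int_\tau]\mapsto[\chi(\iota_{\mathcal B,*}\tau)]$ on chain representatives and then setting $u=A_{\X,0}^{-1}\circ v\circ A_{\Gamma,0}$, checking well--definedness separately. Your sketch gestures at the same construction (``$A_{\X,0}$ is computed by integrating over $\iota_{\mathcal B}$--images of paths'') but does not carry out the well--definedness check, which is where the actual content lies.
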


\begin{remark}
    It is not immediately obvious that $u$ is well-defined. Our strategy consists in first establishing the existence of the diagram and then using the bottom row to show that $u$ is indeed well-defined.
\end{remark}

The following proposition is one of three key statements needed for the proof of Theorem \ref{thm:abel_jacobi_for_mCRS}.

\begin{proposition}\label{prop:abel_jacobi_maps_commute}
    The Abel-Jacobi maps induce a commutative diagram:
    \begin{equation}\label{eq_twoses}\begin{tikzcd}
    	0 & {\bigoplus_{v \in V}\mathrm{Div_0}(X_v)} & {\mathrm{Div_0(\mathfrak{X})}} & {\mathrm{Div_0(\Gamma)}} & 0 \\
    	0 & {\bigoplus_{v \in V}\Jac(X_v)} & {\Jac(\mathfrak{X})} & {\Jac(\Gamma)} & 0
    	\arrow[from=1-1, to=1-2]
    	\arrow[from=1-2, to=1-3]
    	\arrow["{D \mapsto D_\Gamma}", from=1-3, to=1-4]
    	\arrow[from=1-4, to=1-5]
    	\arrow[from=2-1, to=2-2]
    	\arrow[from=2-2, to=2-3]
    	\arrow[from=2-3, to=2-4]
    	\arrow[from=2-4, to=2-5]
    	\arrow["A_{X,0} = \bigoplus_{v \in V} A_{v,0}",from=1-2, to=2-2]
    	\arrow["A_{\mathfrak{X},0}", from=1-3, to=2-3]
    	\arrow["A_{\Gamma,0}",from=1-4, to=2-4].
    \end{tikzcd}\end{equation}
\end{proposition}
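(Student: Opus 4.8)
The plan is to check exactness of the two rows and commutativity of the two squares; the bottom row requires nothing new, since it is precisely the sequence \eqref{eq:ses_of_jacobians} produced from Proposition \ref{prop:map_of_ses_to_define_jacobian} by the snake lemma. For the top row: the map $\bigoplus_{v\in V}\Div_0(X_v)\to\Div_0(\X)$, $(D_v)_v\mapsto\sum_v D_v$, is injective because the surfaces $X_v$ are pairwise disjoint in $|\X|$, and it lands in $\Div_0(\X)$ because each summand has degree $0$; the map $D\mapsto D_\Gamma$ is a homomorphism into $\Div_0(\Gamma)$ because $\deg(D_\Gamma)=\deg(D)$. Unwinding the definition of $D_\Gamma$ shows $D_\Gamma=0$ exactly when $D(p)=0$ for all $p$ in the interior of an edge and $\deg(D_v)=0$ for all $v$, i.e. exactly when $D$ is in the image of $\bigoplus_v\Div_0(X_v)$; this gives exactness in the middle. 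Surjectivity of $D\mapsto D_\Gamma$ follows by lifting a given $E\in\Div_0(\Gamma)$ verbatim on the edge interiors and replacing each term $E(v)\cdot v$ by $E(v)\cdot p_v$ on $X_v$, which again has total degree $\deg(E)=0$; this lift is in fact the splitting that reappears in Theorem \ref{thm:abel_jacobi_for_mCRS}.

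For the left square, fix $(D_v)_v\in\bigoplus_v\Div_0(X_v)$ and set $D=\sum_v D_v$. Since $A_{\X,0}$ is independent of the basepoint, I may compute $A_{\X,0}(D)$ using, for each point $p$ of $X_v$ in the support of $D$, an auxiliary path $\gamma_p$ that runs from $p_0$ to a fixed reference point $q_v\in X_v$ along a single common route (in particular entering $X_v$ only along its final leg) and then stays inside $X_v$. Because $\sum_{p\in X_v}D(p)=0$ for each $v$, the contributions of the shared routes cancel in every coordinate of $\Omega^*(\X)$; moreover every $j_*(\gamma_p)$ collapses to the same path in $\Gamma$ ending at $v$, so the $\Omega^*(\Gamma)$-coordinate cancels as well; and what survives in the $\Omega^*(X_v)$-coordinate is exactly $A_{v,0}(D_v)$, with the $\Omega^*(X_w)$-coordinates for $w\neq v$ vanishing. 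Hence $A_{\X,0}(D)$ equals the image of $(A_{v,0}(D_v))_v$ under $\bigoplus_v\Jac(X_v)\hookrightarrow\Jac(\X)$, which is the claimed commutativity.

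For the right square, let $D\in\Div_0(\X)$. The image of $A_{\X,0}(D)$ in $\Jac(\Gamma)=\Omega^*(\Gamma)/H_1(\Gamma)$ is represented by $\sum_p D(p)\int_{j_*(\gamma_p)}$. For $p$ in the interior of an edge, $j_*(\gamma_p)$ is a path in $\Gamma$ from $j(p_0)$ to $j(p)=p$, so its class is $A_\Gamma(p)$; for $p\in X_v$, it is a path from $j(p_0)$ to $v$, so its class is $A_\Gamma(v)$. Collecting terms and using $\sum_{p\in X_v}D(p)=\deg(D_v)$ yields $\sum_{p\in\Gamma\setminus V}D(p)A_\Gamma(p)+\sum_{v\in V}\deg(D_v)A_\Gamma(v)=A_{\Gamma,0}(D_\Gamma)$, as required.

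The step I expect to be the main obstacle is the left-square computation: one must genuinely use the basepoint-independence of $A_{\X,0}$ on degree-zero divisors to reroute all auxiliary paths through a common stretch, so that both the off-$X_v$ $\Omega^*$-coordinates and the $\Omega^*(\Gamma)$-coordinate actually cancel and only the intrinsic integral on $X_v$ remains. Once the paths are chosen coherently, both squares reduce to the already-established Abel--Jacobi statements on the $X_v$ and on $\Gamma$, so no further input is needed.
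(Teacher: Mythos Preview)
Your proof is correct and follows essentially the same approach as the paper: both verify the top row directly and establish commutativity of the squares by routing all auxiliary paths for a given $X_v$ along a common stretch so that the degree-zero condition forces cancellation. Your right-square argument is in fact slightly cleaner than the paper's, since you compute $\sum_p D(p)\,A_\Gamma(j(p))$ directly and regroup via the definition of $D_\Gamma$, whereas the paper reduces to $D=a^1-a^2$ and splits into two cases.
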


\begin{proof}
    The exactness of the first line is immediate. 
    To show commutativity, we choose a base point $p_0 \in X_{v_0}$ for some $v_0 \in V$ and a spanning tree $T$ of $\Gamma$. 
    By linearity, it suffices to consider an element $D_{v_1}$ of $\bigoplus_{v \in V} \Div_0(X_v)$ which is only nonzero in the component of $X_{v_1}$. There is exactly one sequence of edges $(e_1,\hdots,e_m)$ in $T$ connecting $v_0$ to $v_1$. We choose $x_{v_1}^{e_m}$ as the basepoint for the Abel-Jacobi map $A_{v_1,0} : \Div_0(X_{v_1}) \to \Jac(X_{v_1})$. Since $\deg(D_{v_1}) = 0$, we can write $D_{v_1}$ as
    $$
    D_{v_1} = \sum_j \big(a_j^1 - a_j^2\big)
    $$
    for suitably chosen, not necessarily distinct points $a_j^i \in X_{v_1}$. Again by linearity, it suffices to show
    \begin{equation}\label{eq:commutativity_riemmann_surfaces_mCRS}
        A_{\X,0}(a_j^1-a_j^2) = \big[(0,\hdots,0,A_{v_1,0}(a_j^1-a_j^2),0,\hdots,0),0\big].
    \end{equation}
    As $A_{\X,0}(p)$ is independent of the chosen path from $p_0$ to $p$ for all $p \in \X$, we can assume without loss of generality that the chosen paths from $p_0$ to $a_j^1$ and $a_j^2$ follow the edge sequence $(e_1,\hdots,e_m)$ and only begin to differ from each other at $x_{v_1}^{e_m}$. Furthermore, we can assume that the restriction of these paths to a path from $x_{v_1}^{e_m}$ to $a_j^1$ and $a_j^2$, respectively, is contained in $X_{v_1}$ and identical to the paths chosen for the maps $a_j^i \mapsto A_{v_1,0}(a_j^i)$ for both $i=1,2$. As the images of $a_j^1$ and $a_j^2$ need to be considered with opposite signs, all contributions to components of $A_{\X,0}$ other than the $X_{v_1}$-component cancel each other out. This shows \eqref{eq:commutativity_riemmann_surfaces_mCRS}.

    Now, let $D \in \Div_0(\X)$. Linearity allows us to assume $D = a^1-a^2$. There are two possible cases to consider:
    \begin{itemize}
        \item[(a)] Suppose $a^1,a^2 \in X_v$ for some $v \in V$. Then, $D_\Gamma = 0$ and (with the necessary identification) $D \in \Div_0(X_v)$. By what we already proved, it follows that
        $$
        A_{\X,0}(D) = \big[(0,\hdots,0,A_{v,0}(D),0,\hdots,0),0\big].
        $$
        Therefore, the projection to the $\Gamma$-part of $A_{\X,0}(D)$ is equal to $0$, and similarly we have
        $$
        A_{\Gamma,0}(D_\Gamma) = A_{\Gamma,0}(0) = 0 \in \Jac(\Gamma).
        $$
        \item[(b)] If we are not in case (a), we have $D_\Gamma = j(a^1)-j(a^2)$, where $j : \X \to \Gamma$ is the quotient map. Choose a path $\gamma \subseteq \X$ form $a^2$ to $a^1$ and paths $\alpha_i \subseteq \X$ from $p_0$ to $a^i$. Then, $\alpha_1 - \gamma - \alpha_2$ is a closed cycle in $\X$ and $j_*(\alpha_1 - \gamma - \alpha_2) = j_*(\alpha_1)-j_*(\gamma)-j_*(\alpha_2)$ is a closed cycle in $\Gamma$. Therefore, there is a representative of $A_{\X,0}(D)$ with $\Gamma$-part
        $$
        \int_{j_*(\alpha_1)} - \int_{j_*(\alpha_2)} = \int_{j_*(\gamma)}.
        $$
        As $j_*(\alpha_i)$ is a path from $j(p_0)$ to $j(a^i)$, we also have
        $$
        A_{\Gamma,0}(D_\Gamma) =  \left[\int_{j_*(\alpha_1)} \right] - \left [\int_{j_*(\alpha_2)} \right] = \left[\int_{j_*(\gamma)} \right],
        $$
        as desired.
    \end{itemize}
\end{proof}

Two prove our next proposition, we make use of \cite[Lemma 2]{HaaseMusikerYu} which shows that rational functions on metric graphs can be written as finite sums of simple functions, so-called \textit{weighted chip-firing moves}. Here we recall the necessary definitions:

\begin{definition}[Subgraph]
    Let $\Gamma$ be a metric graph. A \textit{subgraph} $\Gamma'$ is a compact subset with a finite number of connected components.
\end{definition}

\begin{definition}[Weighted chip-firing move]\label{def:weighted_chip_firing_move}
    Let $\Gamma$ be a compact metric graph. Let $\Gamma_1$ and $\Gamma_2$ be two subgraphs such that $\Gamma_1 \cap \Gamma_2 = \emptyset$ and $\Gamma \setminus (\Gamma_1 \cup \Gamma_2)$ only contains points of valency $2$. Then, $f \in \Rat(\Gamma)$ with
    \begin{itemize}
        \item $f|_{\Gamma_i}$ is constant
        \item $f|_{\Gamma \setminus (\Gamma_1 \cup \Gamma_2)}$ is linear with integer slopes on each connected component
    \end{itemize}
    is called \textit{weighted chip-firing move}.
\end{definition}

\begin{lemma}[\cite{HaaseMusikerYu} Lemma 2] \label{lemma:weighted_chip_firing_moves}
    Every rational function on $\Gamma$ can be written as a finite sum of weighted chip-firing moves.
\end{lemma}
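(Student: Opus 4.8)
I would prove this by peeling weighted chip-firing moves off of $f$ one ``level'' at a time, from the top down, inducting on a combinatorial complexity of $f$.

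Since a rational function $f$ on $\Gamma$ is piecewise linear with integer slopes, the set $S(f) := \{p \in \Gamma : \ord_p(f) \neq 0\} \cup \{p \in \Gamma : p \text{ has valency} \geq 3\}$ is finite, and I would induct on $N(f) := |f(S(f))|$, the number of distinct values that $f$ attains on $S(f)$. If $f$ is constant, then $N(f) = 0$ and $f$ is itself a (degenerate) weighted chip-firing move with $\Gamma_1 = \Gamma$ and $\Gamma_2 = \emptyset$; and if $f$ is non-constant, then a boundary point of $f^{-1}(\max f)$ has strictly negative order and a boundary point of $f^{-1}(\min f)$ has strictly positive order, so $\min f, \max f \in f(S(f))$ and $N(f) \geq 2$. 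So assume $N(f) \geq 2$, put $M := \max f$, and let $c$ be the second-largest element of $f(S(f))$. The key claim is that $g := \max(f - c,\, 0)$ — again a rational function — is a weighted chip-firing move for the disjoint subgraphs $\Gamma_1 := f^{-1}\big((-\infty, c]\big)$ and $\Gamma_2 := f^{-1}(M)$. Indeed $g \equiv 0$ on $\Gamma_1$ and $g \equiv M - c$ on $\Gamma_2$, while $\Gamma \setminus (\Gamma_1 \cup \Gamma_2) = f^{-1}\big((c, M)\big)$ contains neither a point of valency $\neq 2$ nor a point with $\ord_p(f) \neq 0$: any such point would lie in $S(f)$ (after noting that every leaf of $\Gamma$ also has its $f$-value in $f(S(f))$, by following the maximal affine arc out of it), and would therefore have an $f$-value in $f(S(f)) \cap (c, M) = \emptyset$, a contradiction. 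Hence $f$, and so $g = f - c$, is genuinely linear with integer slope on each connected component of $f^{-1}\big((c, M)\big)$, as required by Definition \ref{def:weighted_chip_firing_move}.

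It then remains to observe that $f - g = \min(f, c)$ satisfies $N(f - g) \leq N(f) - 1$, so that the induction hypothesis applies to it: every point of $S(f - g)$ has $(f - g)$-value lying in $f(S(f)) \setminus \{M\}$, because on $\{f < c\}$ the function $\min(f, c)$ agrees with $f$, on $\{f > c\}$ it is locally constant, and any newly created non-smooth point sits on $\{f = c\}$ where its value is $c \in f(S(f))$. Adding $g$ back to the resulting finite sum of weighted chip-firing moves finishes the proof.

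The step requiring the most care is the choice of the threshold $c$: it must be the second-largest value of $f$ on the finite set of branch points and breakpoints — not merely some regular value just below $M$ — because this is exactly what forces the interpolation region $f^{-1}\big((c, M)\big)$ to be a disjoint union of open arcs interior to edges of $\Gamma$ on which $f$ is honestly affine, while simultaneously guaranteeing that $\min(f, c)$ is strictly simpler than $f$. The remaining points — that $\max$ and $\min$ of rational functions are rational functions, and that preimages of closed sets under a piecewise-linear map are subgraphs — are routine. (This is of course the content of \cite[Lemma 2]{HaaseMusikerYu}; the above is one streamlined route to it.)
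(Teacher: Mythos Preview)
The paper does not give its own proof of this lemma; it merely quotes it from \cite{HaaseMusikerYu} and uses it as a black box. So there is nothing in the present paper to compare your argument against.

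That said, your level-set peeling argument is correct and is in fact essentially how the result is proved in \cite{HaaseMusikerYu}: one iteratively slices off the top level set by subtracting the weighted chip-firing move that is constant on $f^{-1}(\max f)$ and on the complement of a thin collar below it. Your choice of the threshold $c$ as the second-largest value of $f$ on $S(f)$ is exactly the right one, and your handling of leaves via the parenthetical remark closes the only potential gap in the valency-$2$ claim for $f^{-1}((c,M))$.

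One cosmetic quibble: the assertion that $N(f)=0$ when $f$ is constant is not literally true if $\Gamma$ has a vertex of valency $\geq 3$ (such a vertex lies in $S(f)$ regardless, so $N(f)=1$). This is harmless, since your actual base case is ``$f$ constant'' and you correctly argue that non-constant $f$ forces $N(f)\geq 2$; the induction therefore terminates either way.
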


\begin{proposition}\label{prop:extended_rational_function_maps_to_zero}
    Let $f_\Gamma \in \Rat(\Gamma)$ be a rational function. By choosing arbitrary nonzero constants for each Riemann surface $X_v$, we can extend $f_\Gamma$ to a rational function $F_\Gamma \in \Rat(\X)$ on $\X$ with $\div(F_\Gamma) \in \Div_0(\X)$ and $\div(F_\Gamma)_\Gamma = \div(f_\Gamma)$. We then have
    $$
    A_{\X,0}\big(\div(F_\Gamma)\big) = 0.
    $$
\end{proposition}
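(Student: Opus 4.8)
The plan is to reduce to the case where $f_\Gamma$ is a single weighted chip-firing move using Lemma \ref{lemma:weighted_chip_firing_moves}, and then to explicitly exhibit the divisor $\div(F_\Gamma)$ as the boundary of a good $1$-chain whose $\Gamma$-part is the standard path witnessing $A_{\Gamma,0}(\div(f_\Gamma)) = 0$, while all $X_v$-parts are trivial. First I would observe that both sides of the claimed equation are additive in $f_\Gamma$ in the following sense: if $f_\Gamma = g_1 + \cdots + g_k$ with each $g_i$ a weighted chip-firing move, pick the constants $1$ on each $X_v$ for each of the extensions $G_i \in \Rat(\X)$ of $g_i$; then $F_\Gamma := G_1 \cdots G_k$ is an extension of $f_\Gamma$ (with constant $1$ on each $X_v$), and $\div(F_\Gamma) = \sum_i \div(G_i)$. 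Since $A_{\X,0}$ is a homomorphism on $\Div_0(\X)$, it suffices to prove the statement for a single weighted chip-firing move, and then to handle the fact that an arbitrary choice of nonzero constants only changes $F_\Gamma$ by a rational function whose $X_v$-parts are constants and whose $\Gamma$-part is $1$ — but the divisor of such a function is zero, so the choice of constants is irrelevant.

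So assume $f_\Gamma$ is a weighted chip-firing move associated to disjoint subgraphs $\Gamma_1, \Gamma_2$, extended by the constant $1$ on every $X_v$. Then $\div(F_\Gamma)$ is supported on $\Gamma$: on each point of $\Gamma \setminus V$ it records the outgoing slopes of $f_\Gamma$, and at each vertex $v \in V$ it records $\ord_v(f_\Gamma)$ as well, since the $X_v$-part is the constant $1$ and contributes nothing (note in particular $\div(F_\Gamma)_v = 0$ for all $v$). Hence $\div(F_\Gamma)$, viewed inside $\Div_0(\X)$, is the image of $\div(f_\Gamma) \in \Div_0(\Gamma)$ under the natural inclusion $\Div_0(\Gamma) \hookrightarrow \Div_0(\X)$ coming from $\Gamma \subseteq |\X|$. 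By Proposition \ref{prop:abel_jacobi_maps_commute}, its $\Gamma$-part under $A_{\Gamma,0}$ is $A_{\Gamma,0}(\div(f_\Gamma)) = 0$ by the Abel-Jacobi theorem for metric graphs (Theorem \ref{thm:abel_jacobi_theorem_for_metric_graphs}); this kills the $\Jac(\Gamma)$-component of $A_{\X,0}(\div(F_\Gamma))$. The content that remains is that the $\bigoplus_v \Jac(X_v)$-component also vanishes — equivalently, that the canonical cycle in $H_1(\X)$ witnessing $A_{\Gamma,0}(\div(f_\Gamma)) = 0$ can be chosen to have trivial $X_v$-parts.

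For this I would argue directly. For a weighted chip-firing move, the standard witness for $A_{\Gamma,0}(\div(f_\Gamma)) = 0$ is obtained as follows: $f_\Gamma$ moves chips from the boundary of $\Gamma_2$ to the boundary of $\Gamma_1$ along the "pipes" $\Gamma \setminus (\Gamma_1 \cup \Gamma_2)$, and $\div(f_\Gamma)$ is the boundary, in the sense of singular homology, of the $1$-chain $c$ on $\Gamma$ that traverses each pipe with multiplicity equal to the (integer) slope of $f_\Gamma$ there. This $c$ lies entirely in $\Gamma$, so I may regard it as a good $1$-chain on $|\X|$ with $c \cap X_v = \emptyset$ for every $v$ and $j_*(c) = c$; consequently $dc = \div(F_\Gamma)$ as chains on $|\X|$ and, chasing the definition of $A_{\X,0}$ together with the representative $\int_{\gamma_p}$ of each $A_{\X,0}(p)$, the value $A_{\X,0}(\div(F_\Gamma))$ is represented by $\chi(c)$; but $\chi(c) = \bigl((0,\dots,0),\int_{j_*(c)}\bigr)$ has trivial $X_v$-parts, and its $\Gamma$-part is $\int_{c}$, which represents $A_{\Gamma,0}(\div(f_\Gamma)) = 0 \in \Jac(\Gamma)$. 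Hence $A_{\X,0}(\div(F_\Gamma)) = 0$.

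The main obstacle I anticipate is the bookkeeping in the last step: making precise that "$A_{\X,0}$ of a divisor that is a genuine singular boundary of a chain supported on $\Gamma$ is zero." One clean way to phrase it, avoiding a direct chain computation, is: the inclusion $\iota\colon \Gamma \hookrightarrow |\X|$ induces $A_\Gamma = A_\X \circ \iota$ (up to the basepoint, which is irrelevant in degree zero), so $A_{\X,0}\bigl(\iota_* \div(f_\Gamma)\bigr) = A_{\Gamma,0}\bigl(\div(f_\Gamma)\bigr)$ computed inside $\Jac(\X)$ via the inclusion $\Jac(\Gamma) \hookrightarrow \Jac(\X)$ — but wait, $\Jac(\Gamma)$ is the quotient on the right of \eqref{eq:ses_of_jacobians}, not a subgroup, so this must instead be read off from the map $\chi$ itself. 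Concretely, $\chi$ restricted to the image of $\iota_*\colon H_1(\Gamma) \to H_1(\X)$ lands in the $\Omega^*(\Gamma)$-summand (its $X_v$-components integrate over $\iota_*(\text{cycle}) \cap X_v = \emptyset$), so the composite $\Div_0(\Gamma) \to \Div_0(\X) \xrightarrow{A_{\X,0}} \Jac(\X)$ factors as $\Div_0(\Gamma)\xrightarrow{A_{\Gamma,0}} \Jac(\Gamma) \xrightarrow{s} \Jac(\X)$ for the canonical section $s$ of \eqref{eq:ses_of_jacobians} coming from $\Omega^*(\Gamma) \subseteq \Omega^*(\X)$; since $\div(F_\Gamma)$ is $\iota_* \div(f_\Gamma)$ and $A_{\Gamma,0}(\div(f_\Gamma)) = 0$, the result follows. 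I would present this factorization argument as the clean version and relegate the chip-firing picture to motivation.
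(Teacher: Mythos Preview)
Your reduction to weighted chip-firing moves and the computation that $A_{\X,0}(\div(F_\Gamma))$ is represented by $\chi(c)=\bigl((0,\ldots,0),\int_{j_*(c)}\bigr)$, where $c$ is the $1$-chain on $\vert\X\vert$ given by the pipes $L_j$ with multiplicities $m_j$, are correct and match the paper's approach. The gap is in the final step, and it appears in both your ``chip-firing picture'' and your ``clean version.''

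The claimed canonical section $s\colon \Jac(\Gamma)\to\Jac(\X)$ coming from $\Omega^*(\Gamma)\subseteq\Omega^*(\X)$ does not exist. You argue that $\chi$ restricted to $\iota_*H_1(\Gamma)$ lands in the $\Omega^*(\Gamma)$-summand because ``$\iota_*(\text{cycle})\cap X_v=\emptyset$,'' but this is false: any embedding $\iota\colon\Gamma\hookrightarrow\vert\X\vert$ must carry a neighborhood of each vertex $v$ \emph{into} the Riemann surface $X_v$ (to connect the marked points $x_v^e$), so $\iota_*(\gamma)\cap X_v$ is a union of arcs and $\int_{\iota_*(\gamma)\cap X_v}$ is generically a nonzero, non-period element of $\Omega^*(X_v)$. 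Consequently the sublattice $\{0\}\times H_1(\Gamma)\subseteq\Omega^*(\X)$ is \emph{not} contained in $\chi(H_1(\X))$, and knowing only that $\int_{j_*(c)}=0$ in $\Jac(\Gamma)$ does not let you conclude $\bigl[(0,\ldots,0),\int_{j_*(c)}\bigr]=0$ in $\Jac(\X)$. (A related confusion upstream: $\div(F_\Gamma)$ is supported at the individual marked points $x_v^e$ with coefficients $\mathrm{slp}_e(f_\Gamma)$, not at a single point of $X_v$ with coefficient $\ord_v(f_\Gamma)$, so it is not literally $\iota_*\div(f_\Gamma)$ either.)

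What is actually needed, and what the paper supplies, is the stronger statement that $\int_{j_*(c)}=0$ already in $\Omega^*(\Gamma)$, not merely in the quotient $\Jac(\Gamma)$. For a harmonic $1$-form $\omega=\sum_e\omega_e\,de$ on a suitable model one computes
\[
\sum_j m_j\int_{L_j}\omega \;=\; \sum_e \bigl(f_\Gamma(e^+)-f_\Gamma(e^-)\bigr)\omega_e
\;=\; \sum_{x} f_\Gamma(x)\Bigl(\sum_{e^+=x}\omega_e-\sum_{e^-=x}\omega_e\Bigr)\;=\;0
\]
by harmonicity of $\omega$. This kills the $\Gamma$-part on the nose, so no lattice correction is needed. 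The paper's Remark immediately following the proof of this proposition flags exactly the pitfall in your last inference.
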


\begin{proof}
    By the linearity of $A_{\X,0}$ and Lemma \ref{lemma:weighted_chip_firing_moves}, we can assume that $f_\Gamma$ is a weighted chip-firing move. Therfore, there exist subgraphs $\Gamma_1$ and $\Gamma_2$ as in Definition \ref{def:weighted_chip_firing_move} such that
    $$
    \overline{\Gamma \setminus (\Gamma_1 \cup \Gamma_2)} = L_1 \sqcup \hdots \sqcup L_r.
    $$
    Here, the $L_j$ are closed line segments with endpoints $a_j^1 \in \Gamma_1$ and $a^2_j \in \Gamma_2$ such that 
    $f_\Gamma|_{L_j}$
    is a linear function with integer slope $m_j$, and $f|_{\Gamma_1}$ and $f|_{\Gamma_2}$ each are constant. We may assume $f_\Gamma(\Gamma_1) < f_\Gamma(\Gamma_2)$.

    \begin{figure}[ht]
\centering
 \begin{tikzpicture}[x=1.5cm,y=1.5cm]

    \draw (0,0) circle (1);
        \draw (-1,0) .. controls (-1,-0.3) and (1,-0.3) .. (1,0);
        \draw [dotted] (-1,0) .. controls (-1,0.3) and (1,0.3) .. (1,0);

    \begin{scope}[xshift=3.75cm, yshift=-2.25cm]
    \draw (0,-0.1) ellipse (1.5 and 0.7);
    \draw [name path=test1] (-0.5,0) .. controls (-0.5,-0.25) and (0.5,-0.25) .. (0.5,0);
    \draw (-0.5,0) .. controls (-0.5,-0.25) and (0.5,-0.25) .. (0.5,0);
    \path [name path=test2] (-0.5,-0.1) -- (0.5,-0.1);
    \draw [name intersections={of=test1 and test2}]
    (intersection-1) .. controls (intersection-1 |- 0,0.1) and (intersection-2 |- 0,0.1) .. (intersection-2); 
    \end{scope}

    \path 
      coordinate (p1) at (-2,1.4)
      coordinate (p2) at (-0.5,0.5)
      coordinate (p3) at (0.4,-0.04)
      coordinate (p4) at (2.2,-1.06)
      coordinate (p5) at (3.3,-1.72)
      coordinate (p6) at (4.5,-2.44)
      coordinate (p7) at (5.5,-3.04)
      coordinate (p8) at (6.5,-3.64)
      coordinate (p9) at (-2,3.4)
      coordinate (p10) at (6.5,-1.64)
      coordinate (p11) at (4.5,-0.44)
      coordinate (p12) at (5.5,-1.04);
      
  \node[node font=\scriptsize, left] at (p1) {$a_j^2$};
  \node[node font=\tiny, right] at (p2) {$x_{v_j^{l_j}}^{e_j^{l_j+1}}$};
  \node[node font=\tiny, above] at (p3) {$x_{v_j^{l_j}}^{e_j^{l_j}}$};
  \node[node font=\tiny, above] at (2.2,-0.95) {$x_{v_j^{l_j-1}}^{e_j^{l_j}}$};
  \node[node font=\tiny, above] at (p5) {$x_{v_j^{l_j-1}}^{e_j^{l_j-1}}$};
  \node[node font=\tiny, above right] at (p6) {$x_{v_j^{l_j-2}}^{e_j^{l_j-1}}$};
  \node[node font=\tiny, above right] at (p7) {$x_{v_j^{1}}^{e_j^{1}}$};
  \node[node font=\scriptsize, right] at (p8) {$a_j^1$};

  \draw (p1) to node[node font=\tiny] [sloped,below] {$e_j^{l_j+1}$} (p2);
  \draw (p3) to node[node font=\tiny] [sloped,below] {$e_j^{l_j}$} (p4);
  \draw (p5) to node[node font=\tiny] [sloped,below] {$e_j^{l_j-1}$} (p6);
  \draw (p7) to node[node font=\tiny] [sloped,below] {$e_j^1$} (p8);
  \fill[black] (p1) circle [radius=2pt];
  \fill[black] (p2) circle [radius=2pt];
  \fill[black] (p3) circle [radius=2pt];
  \fill[black] (p4) circle [radius=2pt];
  \fill[black] (p5) circle [radius=2pt];
  \fill[black] (p6) circle [radius=2pt];
  \fill[black] (p7) circle [radius=2pt];
  \fill[black] (p8) circle [radius=2pt];
  \fill[black] (p9) circle [radius=2pt];
  \fill[black] (p10) circle [radius=2pt];
  \node at (5,-2.7) {$\dots$};

  \node[node font=\scriptsize, left] at (p9) {$a_j^2$};
  \node[node font=\scriptsize, right] at (p10) {$a_j^1$};
  \draw (p9) to node[node font=\small] [sloped,above] {$L_j \subseteq \Gamma$} (p11);
  \draw (p12) -- (p10);
  \node at (5,-0.7) {$\dots$};

  \node[node font=\scriptsize] at (-1.4,-1) {$X_{v_j^{l_j}}$};
  \node[node font=\scriptsize] at (0.4,-2.2) {$X_{v_j^{l_j-1}}$};
    
\end{tikzpicture}
\caption{This picture shows the second case below. In the geometric realization $|\X|$ of $\X$, the line segment $L_j$ has Riemann surfaces $X_{v_j^i}$ with two marked points each in its interior.}
\end{figure}
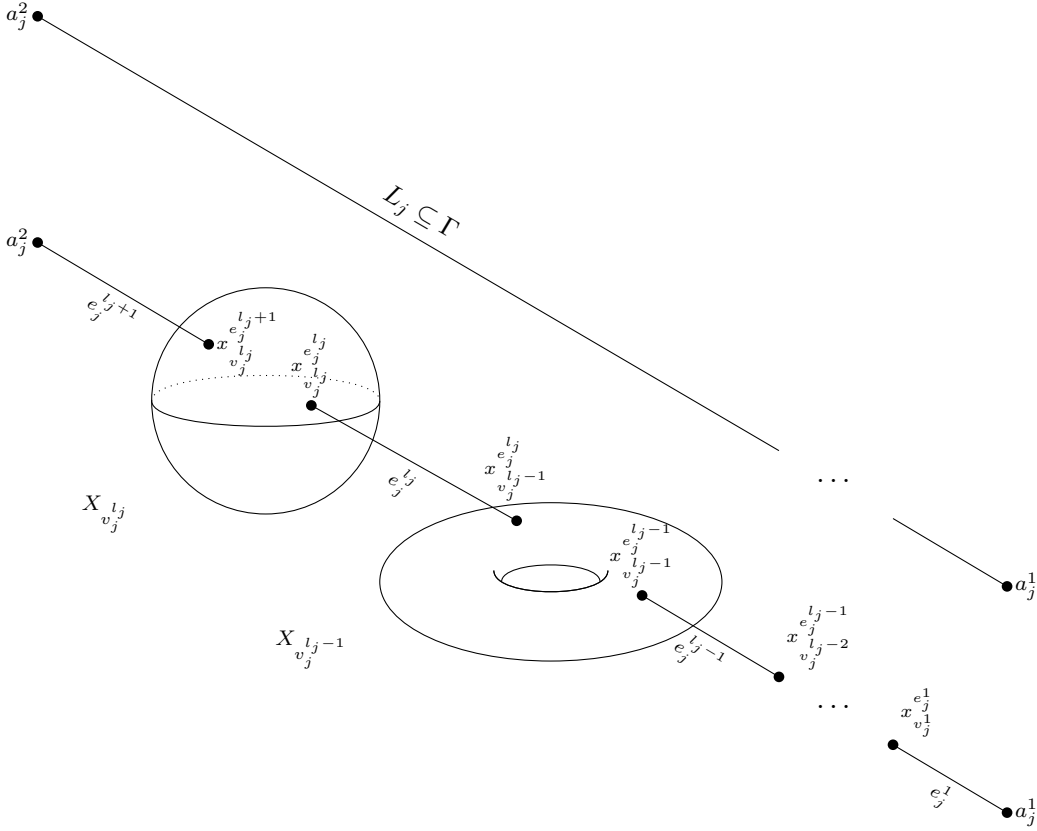

    The divisor of $f_\Gamma$ can be written as
    $$
    \div(f_\Gamma) = \sum_{j = 1}^r m_j a_j^1 - m_j a_j^2.
    $$
    We extend $f_\Gamma$ to a function $F_\Gamma \in \Rat(\X)$ as described in the statement of the proposition. Note that the interior of each line segment $L_j$ only contains points $p$ with $\val(p) = 2$. There are two possible cases:
    \begin{itemize}
        \item[(1)] The interior of $L_j$ does not contain any points which are vertices in the chosen model $G$ of $\Gamma$ in the definition of $\X$. Therefore, $L_j$, considered as a subset of $\X$, is entirely contained in an edge $e \subseteq \X$.
        \item[(2)] There are vertices $v_j^1,\hdots,v_j^{l_j}$ in the chosen model $G$ of $\Gamma$ with associated Riemann surfaces $X_{v_j^i}$ lying in the interior of $L_j$. The sets of marked points are then of the form
        $$
        \mathcal{A}_{v_j^i} = \Big\{x_{v_j^i}^{e_j^i}, x_{v_j^i}^{e_j^{i+1}}\Big\},
        $$
        where $e_j^1$ is the edge on which $a_j^1$ is located, and $e_j^{l_j+1}$ is the edge on which $a_j^2$ is located. All other edges $e_j^i$ connect the Riemann surface $X_{v_j^{i-1}}$ to $X_{v_j^i}$.
    \end{itemize}

    We divide the indices according to these two case into the sets $J_1$ and $J_2$. We can then write
    \begin{equation*}
        \mathrm{div}(F_\Gamma) = \sum_{j \in J_1} m_j a_j^1 - m_j a_j^2 + \sum_{j \in J_2} m_j \left(a_j^1 + \left(\sum_{i = 1}^{l_j} - x_{v_j^i}^{e_j^{i}} + x_{v_j^i}^{e_j^{i+1}}\right) - a_j^2\right),
    \end{equation*}
    where we keep writing $a_j^1$ and $a_j^2$ even if one of these points is a vertex of the model $G$ and could therefore also be denoted by $x_v^e$.
    
     Suppose first that $j \in J_1$. Choose a path $\gamma$ parameterizing $L_j$ with start point $a_j^2$ and endpoint $a_j^1$. We claim that
        $$
        A_{\X,0}(a_j^1-a_j^2) = \left[(0,\hdots,0),\int_\gamma\right].
        $$
        To prove this claim, choose a path $\alpha_2$ from a basepoint $p_0$ to $a_j^2$. The path $\alpha_1 := \alpha_2 \cdot \gamma$ given by concatenation of $\alpha_2$ and $\gamma$ is a path from $p_0$ to $a_j^1$. It follows that
        \begin{align*}
            A_{\X,0}(a_j^1-a_j^2) &= \left[\left(
            \int_{X_{v_1} \cap \alpha_1}, \hdots, \int_{X_{v_n} \cap \alpha_1}
            \right),\int_{j_*(\alpha_1)}\right] - 
            \left[\left(
            \int_{X_{v_1} \cap \alpha_2}, \hdots, \int_{X_{v_n} \cap \alpha_2}
            \right), \int_{j_*(\alpha_2)}
            \right] \\
            &= \left[\left( \int_{X_{v_1} \cap \gamma}, \hdots, \int_{X_{v_n} \cap \gamma} \right), \int_\gamma \right].
        \end{align*}
        Since $L_i$ is contained in an edge, every intersection $X_{v} \cap \gamma$ contains at most two points. Therefore, $A_{\X,0}(a_j^1-a_j^2)$ has trivial $X$-part.
        
         Suppose now that $j \in J_2$. Then, $A_{\X,0}$ equals the sum of the integrals along the line segments
        $$
        \Big(a_j^2, x_{v_j^{l_j}}^{e_j^{l_j+1}}\Big),\hdots,\Big(x_{v_j^1}^{e_j^{1}},a_j^1\Big).
        $$
        One can now apply the calculation from the first case to each of these line segments separately to see that we still have
        $$
        A_{\X,0}\left(a_j^1 + \left(\sum_{i = 1}^{l_j} - x_{v_j^i}^{e_j^{i}} + x_{v_j^i}^{e_j^{i+1}}\right) - a_j^2\right) = \left[(0,\hdots,0),\int_{\gamma_j}\right]
        $$
        where $\gamma_j$ parametrizes $L_j$ from $a_j^2$ to $a_j^1$.
    
    So far, we have shown
    $$
    A_{\X,0}(\div(F_\Gamma)) = \left[(0,\hdots,0),\sum_{j = 1}^r m_j \int_{\gamma_j}
    \right]
    $$
    where $\gamma_j$ parameterizes $L_j$ going from $a_j^2$ to $a_j^1$ as above. We conclude by proving that $\sum_{j = 1}^r m_j \int_{\gamma_j}$ is trivial as an elements of $\Omega^*(\Gamma)$. This shows $A_{\X,0}\big(\div(F_\Gamma)\big) = 0 \in \Jac(\X)$ as claimed.

    Choose a model $\widetilde{G} = (\widetilde{V},\widetilde{E})$ of $\Gamma$ such that $\widetilde{G}$ has a vertex at each $a_j^i$ and no vertices in the interior of any $L_j$. When choosing an orientation for $\widetilde{G}$, the edges $e_j \in \widetilde{E}$ corresponding to a line segment $L_j$ shall be oriented such that $e_j^- = a_j^1$ and $e_j^+ = a_j^2$. Let
    $$
        \omega = \sum_{e \in \widetilde{E}} \omega_e de \in \Omega(\widetilde{G}) \cong \Omega(\Gamma).
    $$
    It follows that
    \begin{align*}
        \sum_{j = 1}^r m_j \int_{\gamma_j} \omega &= \sum_{j = 1}^r \sum_{e' \in \widetilde{E}} \frac{f_\Gamma(a^2_j)-f_\Gamma(a^1_j)}{\mathrm{length}(L_j)} \omega_{e'} \int_{L_j} de' \\
        &= \sum_{e \in \widetilde{E}} \sum_{e' \in \widetilde{E}} \frac{f_\Gamma(e^+) - f_\Gamma(e^-)}{\mathrm{length}(e)} \omega_{e'} \int_e de' \\
        &= \sum_{e \in \widetilde{E}} \big[f_\Gamma(e^+)-f_\Gamma(e^-)\big] w_e 
        = \sum_{x \in \widetilde{V}} f_\Gamma(x) \left(\sum_{\substack{e \in \widetilde{E} \\ x = e^+}} w_e - \sum_{\substack{e \in \widetilde{E} \\ x = e^-}} w_e \right) = 0. 
    \end{align*}
\end{proof}

\begin{remark}
    This last calculation is adapted from \cite{BakerFaber}. By applying the Abel-Jacobi theorem for metric graphs, we know that $\sum_{j = 1}^r m_j \int_{\gamma_j} \equiv 0$ as an element of $\Jac(\Gamma)$. This just means that this functional is identical to an integral along a closed path $\gamma \in H_1(\Gamma,\Z)$. However, in general
    $$
    \int_{X_v \cap \gamma} \neq 0 \in \Jac(X_v).
    $$
    Therefore, we cannot immediately conclude that
    $$
    \left[(0,\hdots,0),\sum_{j = 1}^r m_j \int_{\gamma_j}\right] = 0 \in \Jac(\X),
    $$
    and need to do this additional calculation.
\end{remark}

We now conclude with the proof of  Theorem \ref{thm:abel_jacobi_for_mCRS} (and thus Theorem \ref{mainthm_AbelJacobimCRS} from the introduction).

\begin{proof}[Proof of Theorem \ref{thm:abel_jacobi_for_mCRS}]
    By Proposition \ref{prop:abel_jacobi_maps_commute}, we have a commutative diagram
    \begin{equation}\label{eq_twoseslocal}\begin{tikzcd}
    	0 & {\bigoplus_{v \in V}\mathrm{Div_0}(X_v)} & {\mathrm{Div_0(\mathfrak{X})}} & {\mathrm{Div_0(\Gamma)}} & 0 \\
    	0 & {\bigoplus_{v \in V}\Jac(X_v)} & {\Jac(\mathfrak{X})} & {\Jac(\Gamma)} & 0
    	\arrow[from=1-1, to=1-2]
    	\arrow[from=1-2, to=1-3]
    	\arrow["{D \mapsto D_\Gamma}", from=1-3, to=1-4]
    	\arrow[from=1-4, to=1-5]
    	\arrow[from=2-1, to=2-2]
    	\arrow[from=2-2, to=2-3]
    	\arrow[from=2-3, to=2-4]
    	\arrow[from=2-4, to=2-5]
    	\arrow["A_{X,0}=\bigoplus_{v \in V} A_{v,0}",from=1-2, to=2-2]
    	\arrow["A_{\mathfrak{X},0}", from=1-3, to=2-3]
    	\arrow["A_{\Gamma,0}",from=1-4, to=2-4].
    \end{tikzcd}\end{equation}
    By the Abel-Jacobi-Theorem for compact Riemann surfaces, we already know that the kernel of the left vertical arrow $A_{X,0}=\bigoplus_{v \in V} A_{v,0}$ is given by $\bigoplus_{v\in V}\PDiv(X_v)$ and by the Abel-Jacobi-Theorem for metric graphs the kernel of the right vertical arrow $A_{\Gamma,0}$ is given by $\PDiv(\Gamma)$. 

    We now show that the kernel of the vertical arrow in the middle is given by $\PDiv(\mathfrak{X})$, i.e. that $D \in \Div_0(\X)$ is a principal divisor if and only if $A_{\X,0}(D) = 0$ in $\Jac(\X)$.
    
    Let $D = \div(\mathfrak{f}) \in \mathrm{PDiv}(\X)$ be a principal divisor. The function $\mathfrak{f}$ is given by a rational function $f_\Gamma \in \Rat(\Gamma)$ and meromorphic functions $f_v \in \mathcal{M}(X_v)$. Trivially extend $f_\Gamma$ to a function $F_\Gamma$ on $\X$ as above. We can now write $D$ as
    $$
    D = \sum_{v \in V} \div(f_v) + \div(F_\Gamma).
    $$
    Proposition \ref{prop:abel_jacobi_maps_commute} and the Abel-Jacobi theorem for Riemann surfaces imply
    $$
    A_{\X,0}\left( \sum_{v \in V} \div(f_v) \right) = 0,
    $$
    and Proposition \ref{prop:extended_rational_function_maps_to_zero} implies
    $$
    A_{\X,0}\big(\div(F_\Gamma)\big) = 0.
    $$
    This shows the \textit{only-if}-part.
    
    For the \textit{if}-part, suppose we are given a divisor $D \in \Div_0(\X)$ such that
    $$
    A_{\X,0}(D) = 0.
    $$
    By Proposition \ref{prop:abel_jacobi_maps_commute}, it follows that also $A_{\Gamma,0}(D_\Gamma) = 0$, and the Abel-Jacobi theorem for metric graphs, Theorem \ref{thm:abel_jacobi_theorem_for_metric_graphs}, implies the existence of a rational function $f_\Gamma \in \Rat(\Gamma)$ such that $\div(f_\Gamma) = D_\Gamma$. Choosing constant functions for each Riemann surface $X_v$, we can extend this function to a rational function $F_\Gamma \in \Rat(\X)$ on $\X$ satisfying $\div(F_\Gamma)_\Gamma = \div(f_\Gamma) = D_\Gamma$. By Proposition \ref{prop:extended_rational_function_maps_to_zero}, we still have
    $$
    A_{\X,0}\big(D - \div(F_\Gamma)\big) = 0.
    $$
    The exactness of the first and second row of Diagram \eqref{eq_twoses} in Proposition \ref{prop:abel_jacobi_maps_commute} also implies 
    $$
    D - \div(F_\Gamma) \in \bigoplus_{v \in V} \Div_0(X_v),
    $$
    and
    $$
    A_{X,0}\big(D - \div(F_\Gamma)\big) = 0 \in \bigoplus_{v \in V} \Jac(X_v).
    $$
    The Abel-Jacobi theorem for Riemann surfaces provides us with meromorphic functions $f_v \in \mathcal{M}(X_v)$ such that
    \begin{equation*}
        D - \div(F_\Gamma) = \sum_{v \in V} \div(f_v).  
    \end{equation*}
    Setting $\mathfrak{f} := \big(\{f_v\}_{v \in V}, f_\Gamma\big)$, we have
    $D = \div(\mathfrak{f})$
    since obviously
    $$
    \div(\mathfrak{f}) - \sum_{v \in V} \div(f_v) = \div(F_\Gamma).
    $$
    This shows that $D$ is principal.

    We now apply the snake lemma to Diagram \eqref{eq_twoseslocal} and, recalling that $\Div_0(X_v) \to \Jac(X_v)$ and $\Div_0(\Gamma) \to \Jac(\Gamma)$ are both surjective, we obtain an isomorphism of short exact sequences
    \[\begin{tikzcd}
    	0 & {\bigoplus_{v \in V}\mathrm{Pic}^0(X_v)} & {\mathrm{Pic}^0(\X)} & {\mathrm{Pic}^0(\Gamma)} & 0 \\
    	0 & {\bigoplus_{v \in V} \Jac(X_v)} & {\Jac(\X)} & {\Jac(\Gamma)} & 0
    	\arrow[from=1-1, to=1-2]
    	\arrow[from=1-2, to=1-3]
    	\arrow["A_{X,0} = \bigoplus_{v \in V} A_{v,0}",from=1-2, to=2-2]
    	\arrow["\small{{[D] \mapsto [D_\Gamma]}}"',from=1-3, to=1-4]
    	\arrow["{A_{\X,0}}", from=1-3, to=2-3]
    	\arrow[from=1-4, to=1-5]
    	\arrow["{A_{\Gamma,0}}", from=1-4, to=2-4]
    	\arrow[from=2-1, to=2-2]
    	\arrow[from=2-2, to=2-3]
    	\arrow[from=2-3, to=2-4]
    	\arrow[from=2-4, to=2-5]
    \end{tikzcd}\]
    as claimed.

    Finally, we construct a map $v : \Jac(\Gamma) \rightarrow \Jac(\X)$ such that $u = A_{\X,0}^{-1} \circ v \circ A_{\Gamma,0}$. Pick $v_0 \in V$. Let $\sum_j \int_{\gamma_j} \in \Jac(\Gamma)$ where $\gamma_j$ is a path in $\Gamma$ from $v_0$ to some point $a^j$ (since $A_{\Gamma,0}$ is surjective, we know that any element of $\Jac(\Gamma)$ can be represented in this way). As in Proposition \ref{prop_seshomology}, we use our set of basepoints $\mathcal{B}$ to once again obtain an embedding $\Gamma \hookrightarrow |\X|$. This allows us to extend the chain $\sum_j \gamma_j$ to a chain $\sum_j\delta_j$ on $|\X|$ and define
    $$
    v\left(\sum_j \int_{\gamma_j} \right) = \sum_j\left[\left(\int_{\delta_j \cap X_{v_1}},\hdots, \int_{\delta_j \cap X_{v_n}}\right),\left( \int_{j_*(\delta_j)}\right)\right].
    $$
    Note that $j_*(\delta_j)$ is just $\gamma_j$. To show that this map is well-defined (up to the choice of an embedding $\Gamma \hookrightarrow |\X|$), we need to show that an element of the form
    $$
    A_{\Gamma,0}(\div(f))
    $$
    goes to zero under $v$. If we write 
    $$
    A_{\Gamma,0}(\div(f)) = \sum_j \int_{\gamma_j},
    $$
    then, since
    \[\begin{tikzcd}
    	{Z_1(\Gamma)} & {Z_1(\X)} \\
    	{C_1(\Gamma)} & {C_1(\X)}
    	\arrow[from=1-1, to=1-2]
    	\arrow[hook, from=1-1, to=2-1]
    	\arrow[from=1-2, to=2-2]
    	\arrow[hook, from=1-2, to=2-2]
    	\arrow[from=2-1, to=2-2]
    \end{tikzcd},\]
    the closed chain $\sum_j \gamma_j$ will be sent to a closed chain $\sum_j \delta_j \in Z_1(\X)$ under the embedding $\Gamma \hookrightarrow |\X|$. Integrating along this chain represents $0$ in $\Jac(\X)$. Finally, it is clear from the construction that
    $$
    u\Big(\sum_j a^j\Big) = A_{\X,0}^{-1} \circ v \circ A_{\Gamma,0}\Big(\sum_j a^j\Big).
    $$
    The reason $v$ does not depend on the embedding $\Gamma \hookrightarrow |\X|$ itself, but only on the set of basepoints $\mathcal{B}$ is that choosing two different $1$-cells connecting $p_v$ with the marked points of $X_v$ leads to functionals differing by an integral along a closed path on $X_v$ which are also $0$ as elements of $\Jac(\X)$.
\end{proof}



\bibliographystyle{amsalpha}
\bibliography{biblio}{}

\providecommand{\bysame}{\leavevmode\hbox to3em{\hrulefill}\thinspace}
\providecommand{\MR}{\relax\ifhmode\unskip\space\fi MR }
\providecommand{\MRhref}[2]{%
  \href{http://www.ams.org/mathscinet-getitem?mr=#1}{#2}
}
\providecommand{\href}[2]{#2}
\begin{thebibliography}{FRTU19}

\bibitem[AB15]{AminiBaker}
Omid Amini and Matthew Baker, \emph{Linear series on metrized complexes of
  algebraic curves}, Math. Ann. \textbf{362} (2015), no.~1-2, 55--106.

\bibitem[AC13]{AminiCaporaso}
Omid Amini and Lucia Caporaso, \emph{Riemann-{Roch} theory for weighted graphs
  and tropical curves}, Adv. Math. \textbf{240} (2013), 1--23.

\bibitem[AN22]{AminiNicolussiII}
Omid Amini and Noema Nicolussi, \emph{Moduli of hybrid curves {II}: Tropical
  and hybrid {L}aplacians}, 2022, arXiv:2203.12785.

\bibitem[AN24]{AminiNicolussiI}
\bysame, \emph{Moduli of hybrid curves {I}: Variations of canonical measures},
  2024, arXiv:2007.07130.

\bibitem[BF11]{BakerFaber}
Matthew Baker and Xander Faber, \emph{Metric properties of the tropical
  {A}bel-{J}acobi map}, J. Algebraic Combin. \textbf{33} (2011), no.~3,
  349--381.

\bibitem[BJ16]{BakerJensen}
Matthew Baker and David Jensen, \emph{Degeneration of linear series from the
  tropical point of view and applications}, Nonarchimedean and tropical
  geometry. Based on two Simons symposia, Island of St. John, March 31 -- April
  6, 2013 and Puerto Rico, February 1--7, 2015, Cham: Springer, 2016,
  pp.~365--433.

\bibitem[BJ17]{BoucksomJonsson}
S{\'e}bastien Boucksom and Mattias Jonsson, \emph{Tropical and
  non-{Archimedean} limits of degenerating families of volume forms}, J.
  {\'E}c. Polytech., Math. \textbf{4} (2017), 87--139.

\bibitem[BN07]{BakerNorine}
Matthew Baker and Serguei Norine, \emph{Riemann-{Roch} and {Abel}-{Jacobi}
  theory on a finite graph}, Adv. Math. \textbf{215} (2007), no.~2, 766--788.

\bibitem[FRTU19]{FosterRanganathanTalpoUlirsch}
Tyler Foster, Dhruv Ranganathan, Mattia Talpo, and Martin Ulirsch,
  \emph{Logarithmic {Picard} groups, chip firing, and the combinatorial rank},
  Math. Z. \textbf{291} (2019), no.~1-2, 313--327.

\bibitem[GK08]{GathmannKerber}
Andreas Gathmann and Michael Kerber, \emph{A {Riemann}-{Roch} theorem in
  tropical geometry}, Math. Z. \textbf{259} (2008), no.~1, 217--230.

\bibitem[HMY12]{HaaseMusikerYu}
Christian Haase, Gregg Musiker, and Josephine Yu, \emph{Linear systems on
  tropical curves}, Math. Z. \textbf{270} (2012), no.~3-4, 1111--1140.

\bibitem[Len22]{Len}
Yoav Len, \emph{Chip-firing games, {J}acobians, and {P}rym varieties}, 2022,
  arXiv:2210.14060.

\bibitem[Mir95]{Miranda_RS}
Rick Miranda, \emph{Algebraic curves and {Riemann} surfaces}, Grad. Stud.
  Math., vol.~5, Providence, RI: AMS, American Mathematical Society, 1995.

\bibitem[Mug21]{Mugnolo}
Delio Mugnolo, \emph{What is actually a metric graph?}, 2021, arXiv:1912.07549.

\bibitem[MW22]{MolchoWise}
Samouil Molcho and Jonathan Wise, \emph{The logarithmic {Picard} group and its
  tropicalization}, Compos. Math. \textbf{158} (2022), no.~7, 1477--1562.

\bibitem[MZ08]{MikhalkinZharkov}
Grigory Mikhalkin and Ilia Zharkov, \emph{Tropical curves, their {Jacobians}
  and theta functions}, Curves and abelian varieties. Proceedings of the
  international conference, Athens, GA, USA, March 30--April 2, 2007,
  Providence, RI: American Mathematical Society (AMS), 2008, pp.~203--230.

\bibitem[Oda19a]{OdakaII}
Yuji Odaka, \emph{Tropical geometric compactification of moduli. {II}:
  {{\(A_g\)}} case and holomorphic limits}, Int. Math. Res. Not. \textbf{2019}
  (2019), no.~21, 6614--6660.

\bibitem[Oda19b]{OdakaI}
\bysame, \emph{Tropical geometric compactification of moduli, {{\(\mathrm{I} -
  M_g\)}} case}, Moduli of \(K\)-stable varieties. Selected papers based on the
  presentations at the workshop, Rome, Italy, July 10--14, 2017, Cham:
  Springer, 2019, pp.~75--101.

\end{thebibliography}

\end{document}